	\def\MR#1{}
\newtheorem{theorem}{Theorem}[section]
\newtheorem{lemma}[theorem]{Lemma}
\newtheorem{proposition}[theorem]{Proposition}
\theoremstyle{definition}
\newtheorem{notation}[theorem]{Notation}
\newtheorem{question}[theorem]{Question}
\theoremstyle{remark}
\newtheorem{remark}[theorem]{Remark}
\numberwithin{equation}{section}
\newcommand{\bP}{\operatorname{\mathbb{P}}}
\newcommand{\bF}{\operatorname{\mathbb{F}}}
\newcommand{\homog}{\operatorname{homog}}
\newcommand{\spec}{\operatorname{Spec}}
\numberwithin{equation}{section}
\begin{document}

\title{On the proportion of transverse-free plane curves}

\author{Shamil Asgarli}
\address{Department of Mathematics, University of British Columbia, Vancouver, BC V6T 1Z2}
\email{sasgarli@math.ubc.ca}

\author{Brian Freidin}
\address{Department of Mathematics, University of British Columbia, Vancouver, BC V6T 1Z2}
\email{bfreidin@math.ubc.ca}

\subjclass[2020]{Primary 14H50; Secondary 14N05, 14N10, 05B25, 15A15}
\keywords{plane curves, finite fields, Levi graph}

\maketitle

\begin{abstract} We study the asymptotic proportion of smooth plane curves over a finite field $\bF_q$ which are tangent to every line defined over $\bF_q$. This partially answers a question raised by Charles Favre. Our techniques include applications of Poonen's Bertini theorem and Schrijver's theorem on perfect matchings in regular bipartite graphs. Our main theorem implies that a random smooth plane curve over $\bF_q$ admits a transverse $\bF_q$-line with very high probability.
\end{abstract}

\section{Introduction}

Let $C\subset\bP^2$ be a plane curve defined over an arbitrary field $k$. If $k$ is an infinite field, then we can apply Bertini's theorem to find a line $L$ defined over $k$ such that $L$ meets $C$ transversely. By B\'ezout's theorem, any line (which is not a component of the curve $C$) meets $C$ in exactly $d=\deg(C)$ points with coordinates in $\overline{k}$ counted with multiplicity; by definition, a \textbf{transverse line} is one which meets $C$ in $d$ \textit{distinct} points. Equivalently, $L$ is \emph{not} transverse to $C$ if and only if $L$ is tangent to $C$ or $L$ passes through a singular point of $C$.

When $k=\bF_q$ is a finite field, it is not necessarily true that we can find a line $L$ defined over $\bF_q$ such that $L$ meets $C$ transversely. It is possible that each of the $q^2+q+1$ lines in $\bP^2$ defined over $\bF_q$ is tangent to $C$. Indeed, there exist smooth plane curves of degree $q+2$ with this property \cite{Asg19}*{Example 2.A}. There are also smooth examples in each sufficiently large degree \cite{Asg19}*{Example 2.B}.

Given a plane curve $C\subset\bP^2$ defined over a finite field $\bF_q$, we say that $C$ is \textbf{transverse-free} if $C$ admits no transverse line defined over $\bF_q$.

In the present paper, we will restrict our attention to smooth plane curves. In order to count the proportion of smooth transverse-free plane curves, we use the notion of a \textbf{natural density} denoted by $\mu$. Given a subset of $\mathcal{A} \subset R\colonequals\bF_q[x,y,z]$, the natural density $\mu(\mathcal{A})$ is the limit as $d\to\infty$ of the fraction of elements of degree $d$ in $\mathcal{A}$ among all degree $d$ homogeneous polynomials in $R$. Note that counting homogeneous polynomials and curves of the same degree differ by a factor arising from scaling, so that the limiting proportion for both counts coincide.  Our main result is the following.

\begin{theorem}\label{thm:smooth-proportion-transverse-free}
Let $\mathcal{F}\subset\bF_{q}[x,y,z]$ be the set of polynomials defining smooth transverse-free plane curves. Then
\begin{align*}
e^{-(q^2+q+1)}\cdot \left(\frac{1}{q}+\frac{1}{q^2}-\frac{1}{q^3}\right)^{q^2+q+1} \leq \mu(\mathcal{F}) \leq 7.5\cdot \left(\frac{1}{q}+\frac{1}{q^2}-\frac{1}{q^3}\right)^{q^2+q+1} 
\end{align*}
\end{theorem}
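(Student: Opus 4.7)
The plan is to reduce the statement to a single-line calculation and then combine the resulting events across all $q^2+q+1$ $\bF_q$-lines of $\bP^2$ using distinct techniques for the two bounds.

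First I would establish the single-line probability: for a fixed $\bF_q$-line $L$, the asymptotic proportion of $f \in \bF_q[x,y,z]_d$ with $f|_L$ not squarefree is $1 - (1-1/q)(1-1/q^2) = 1/q + 1/q^2 - 1/q^3$. Since the restriction map to $L$ is surjective for $d\geq 0$, this reduces to counting non-squarefree binary forms of degree $d$, which I would handle by a direct case analysis on the multiplicity of $y$ dividing the form (the answer also equals $1 - \zeta_{\bP^1_{\bF_q}}(2)^{-1}$, the complement of the Poonen density on $\bP^1$). If the events ``$f|_L$ non-squarefree'' were independent across the $q^2+q+1$ lines, their product would give exactly $P := (1/q + 1/q^2 - 1/q^3)^{q^2+q+1}$.

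For the upper bound, I would drop the smoothness constraint (which only enlarges the event) and bound $\mu(\bigcap_L \mathcal{T}_L)$ for $\mathcal{T}_L = \{f : f|_L \text{ non-squarefree}\}$. Since any two $\bF_q$-lines meet in exactly one point, the correlations among the $\mathcal{T}_L$ should be mild. I would analyze the joint restriction $f \mapsto (f|_L)_L$ into $\bigoplus_L \bF_q[L]_d$, whose image has codimension determined by the incidence geometry of $\bP^2(\bF_q)$; quantifying the resulting cross-correlations via conditional probabilities or a pairwise dependency bound should yield an absolute constant factor, at most $7.5$.

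For the lower bound, I would apply Schrijver's theorem to the Levi graph of $\bP^2(\bF_q)$, a $(q+1)$-regular bipartite graph on $2(q^2+q+1)$ vertices; this produces at least $\bigl(\frac{q^q}{(q+1)^{q-1}}\bigr)^{q^2+q+1}$ perfect matchings $M$. For each $M$, the set $\mathcal{S}_M$ of polynomials $f$ with $L$ tangent to $V(f)$ at $M(L)$ for every $L$ has density $\sim q^{-2(q^2+q+1)}$, since each matched pair imposes two linear conditions. The Schrijver count times this density equals $(eq)^{-(q^2+q+1)}$ asymptotically, matching $e^{-(q^2+q+1)}\cdot P$; the factor $e^{-1}$ arises from $(q/(q+1))^{q-1} \to e^{-1}$ in Schrijver's bound. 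I would then incorporate Poonen's Bertini theorem to ensure smoothness compatibly with the prescribed tangencies.

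The main obstacle will be controlling overcounting in the lower bound: summing $|\mathcal{S}_M|$ across matchings gives an upper bound on $|\bigcup_M \mathcal{S}_M|$, not a lower bound. To extract the union lower bound one must show that a typical smooth transverse-free curve lies in only $O(1)$ of the matching sets $\mathcal{S}_M$, via an averaging or deletion argument. On the upper-bound side the subtler task is verifying that higher-order correlations beyond pairwise do not inflate the constant $7.5$.
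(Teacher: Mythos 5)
The ``main obstacle'' you identify for the lower bound does not actually arise, and its resolution is a one-line observation that your deletion-argument sketch misses: the sets $\mathcal{S}_M$ are \emph{pairwise disjoint}. If $M\neq M'$ are two distinct perfect point-line correspondences, then since both are bijections between the $\bF_q$-points and $\bF_q$-lines of $\bP^2$, there is some $\bF_q$-point $P$ matched to two different lines $L=M^{-1}(P)$ and $L'=(M')^{-1}(P)$ (both containing $P$). A curve in $\mathcal{S}_M\cap\mathcal{S}_{M'}$ would have to be tangent to both $L$ and $L'$ at $P$, which is impossible for a smooth curve since its tangent line at each point is unique. Hence $\underline{\mu}(\mathcal{F})\geq\sum_M\mu(\mathcal{S}_M)$ directly, no averaging or $O(1)$-multiplicity claim required. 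Your reliance on Schrijver's bound and Poonen's theorem for the density of a single $\mathcal{S}_M$ matches the paper's lower-bound argument; the missing disjointness step is what closes the gap, and the comparison $(q+1)^{q^2+q+1}\zeta_{\bP^2}(3)^{-1}q^{-2(q^2+q+1)}\geq\bigl(q^{-1}+q^{-2}-q^{-3}\bigr)^{q^2+q+1}$ is a (somewhat fiddly but elementary) algebraic inequality.

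Your upper-bound sketch is too vague to be called a proof and omits the genuinely hard structural steps. Saying that correlations among the events $\mathcal{T}_L$ ``should be mild'' because two lines meet in one point, and that ``quantifying the resulting cross-correlations'' should yield an absolute constant $\leq 7.5$, identifies the right heuristic but gives no argument. What is actually required: (i) a reduction to tangency points of bounded degree $\leq r$, using Poonen's medium-degree lemma to show the excess has arbitrarily small upper density; (ii) a decomposition of the restricted set indexed by the tuples $(E_i)$ of tangency points on each line, using smoothness (not dropping it) to ensure the $E_i$ are pairwise disjoint; (iii) an exact independence lemma (via the restriction to a finite subscheme) for the events $\mathcal{T}_{L,P}$, $\mathcal{S}_Q$, and their complements at distinct points; and (iv) a separate lemma bounding the density of the ``not tangent to any line through $Q$'' event by a product of complements times a factor $(1-q^{-2})^{-1}$, which after accumulating over all $q^2+q+1$ $\bF_q$-points yields the explicit constant $(1-q^{-2})^{-(q^2+q+1)}\leq(4/3)^7<7.5$. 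None of (i)--(iv) is present in your sketch. You also never address whether $\mu(\mathcal{F})$ exists as a limit (a priori only $\underline{\mu}$ and $\overline{\mu}$ are defined), which the paper handles separately by writing $\mathcal{F}_{\leq r}$ as a finite disjoint union of sets with well-defined densities via Poonen's Taylor-condition theorem.
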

where $e=2.71828...$ is Euler's number.

\medskip

\noindent\textbf{Acknowledgments.} We are grateful to Charles Favre who first raised the question of computing the natural density of transverse-free plane curves over finite fields. C. Favre asked for the exact value of $\mu(\mathcal{F})$, and Theorem~\ref{thm:smooth-proportion-transverse-free} is our partial answer. We also thank Dori Bejleri and Zinovy Reichstein for helpful conversations on this topic. Both authors were supported by postdoctoral research fellowships from the University of British Columbia. The second author was partially supported by a postdoctoral fellowship from the Pacific Institute for the Mathematical Sciences.

\section{Overview of the paper}\label{sect:notation}

In this section, we discuss notation used in the paper, interpret our main theorem, and explain how the paper is organized.

We will work with the finite fields $\bF_{q}$ where $q$ is a fixed prime power. We set $R\colonequals \bF_q[x,y,z]$ and denote by $R_{d}$ the vector space spanned by degree $d$ homogeneous polynomials in $R$. By convention, $0$ is considered a homogeneous polynomial in each degree. As a shorthand, we also define
\begin{align*}
    R_{\homog} = \bigcup_{d\geq 1} R_{d}
\end{align*}
to be the set of non-constant homogeneous polynomials. By definition, elements of $R_{\text{homog}}$ define projective plane curves in $\bP^2$. Elements of $R_{1}$ correspond to lines defined over $\bF_q$, also referred to as  $\bF_q$-lines. Similarly, the points in $\bP^2(\bF_q)$ will be called $\bF_q$-points.

As mentioned in the introduction, we can ask for the asymptotic proportion of a given subset of the ring $R$. More precisely, given a subset $\mathcal{A}\subset R$, we first define $\mathcal{A}_d=\mathcal{A}\cap R_d$ and $\mu_d(\mathcal{A})= \frac{\# \mathcal{A}_d}{\# R_d}$ for each $d\geq 1$. The \textbf{natural density} of $\mathcal{A}$ is defined by
\begin{align*}
    \mu(\mathcal{A}) = \lim_{d\to\infty} \mu_d(\mathcal{A}) = \lim_{d\to\infty} \frac{\# \mathcal{A}_d}{\# R_d}
\end{align*}
provided that the limit exists. Similarly, we define
\begin{align*}
    \underline{\mu}(\mathcal{A}) = \liminf_{d\to\infty} \frac{\# \mathcal{A}_d}{\# R_d} \ \ \ \text{and} \ \ \ 
     \overline{\mu}(\mathcal{A}) = \limsup_{d\to\infty} \frac{\# \mathcal{A}_d}{\# R_d}
\end{align*}
as \textbf{lower} and \textbf{upper densities}, respectively. If upper and lower densities agree, then the natural density exists, and we have the equality $\underline{\mu}(\mathcal{A})=\overline{\mu}(\mathcal{A})=\mu(\mathcal{A})$. As an example with geometric flavour, let $R_{\text{smooth}}\subset R$ denote the subset of polynomials defining smooth plane curves. As a special case of Poonen's Theorem \cite{Poo04}*{Theorem 1.1}, we obtain
\begin{align}\label{eq:probability-smooth-curve}
\mu(R_{\text{smooth}}) = \zeta_{\bP^2}(3)^{-1} = \left(1-\frac{1}{q}\right)\left(1-\frac{1}{q^2}\right)\left(1-\frac{1}{q^3}\right).
\end{align}
Thus, the quantity above can interpreted as the probability that a random plane curve defined over $\bF_q$ is smooth. As mentioned in the introduction, we are counting homogeneous polynomials of degree $d$ instead of plane curves of degree $d$. The limiting proportions are the same, because
\begin{align*}
   \frac{\# \text{ plane curves defined by polynomials of degree } d \text { in } \mathcal{A}}{\# \text{ all plane curves of degree } d}&=  \frac{(\#\mathcal{A}_d-1)/(q-1)}{(\# R_d-1)/(q-1)} \\
   &=\frac{\#\mathcal{A}_d - 1}{\# R_d -1},
\end{align*}
which agrees with $\mu_d(\mathcal{A})$ in the limit as $d\to\infty$.

The aim of the present paper is to investigate the density of the subset
\begin{align*}
\mathcal{F}\colonequals \{f\in R_{\homog} \ | \ & C=\{f=0\} \text{ is a smooth plane curve} \\ 
&\text{such that all $\bF_q$-lines are tangent to $C$}\}.
\end{align*}
A smooth plane curve $C=\{f=0\}$ does not admit any transverse $\bF_q$-lines if and only if $f\in \mathcal{F}$. Hence, $\mathcal{F}$ corresponds to precisely the collection of smooth transverse-free plane curves over $\bF_q$.

We remark that a smooth plane curve $C$ is transverse-free if and only if the dual curve $C^{\ast}$ is space-filling in the sense that $C^{\ast}$ passes through all $\bF_q$-points of the dual projective plane $(\bP^2)^{\ast}$. Indeed, the dual curve $C^{\ast}$ is the plane curve whose points correspond to tangent lines to $C$, and the condition that $C$ is transverse-free is equivalent to asserting that each $\bF_q$-line $L$ is tangent to $C$, that is, $C^{\ast}(\bF_q)=(\bP^2)^{\ast}(\bF_q)$. With this remark in mind, we can also define the set $\mathcal{F}$ as
\begin{align*}
    \mathcal{F}=\{f\in R_{\homog} \ | \ C=\{f=0\} \text{ is smooth and } C^{\ast} \text{ is space-filling}\}.
\end{align*}
It is not clear that $\mu(\mathcal{F})$ actually exists as a limit. We will prove that this is the case in Section~\ref{sect:mu-exists}.

Let us interpret the upper bound given in Theorem~\ref{thm:smooth-proportion-transverse-free} as a probabilistic version of the Bertini's theorem. Using the notation of conditional probability, we have
\begin{align*}
    \mathbb{P}\left(C \text{ is tangent to each } \bF_q\text{-line} \ | \ C \text{ is smooth} \right)
     &= \frac{\mathbb{P}\left(C \text{ is smooth, and tangent to each } \bF_q\text{-line}\right)}{\mathbb{P}\left(C \text{ is smooth}\right)} \\
     &=\frac{\mu(\mathcal{F})}{\mu(R_{\operatorname{smooth}})} \\
     &\leq \frac{7.5\left(q^{-1}+q^{-2}-q^{-3}\right)^{q^2+q+1}}{(1-q^{-1})(1-q^{-2})(1-q^{-3})}
\end{align*}
using the upper bound for $\mu(\mathcal{F})$ in Theorem~\ref{thm:smooth-proportion-transverse-free} and \eqref{eq:probability-smooth-curve}. Thus the probability that a random smooth plane curve $C$ defined over $\bF_q$ satisfies the conclusion of Bertini's theorem (that is, $C$ admits a transverse $\bF_q$-line) can be bounded below.
\begin{align}\label{eq:probability-Bertini-satisfied}
 \mathbb{P}\left(C \text{ admits a transverse } \bF_q\text{-line} \ | \ C \text{ is smooth} \right) \geq 1-\frac{7.5\left(q^{-1}+q^{-2}-q^{-3}\right)^{q^2+q+1}}{(1-q^{-1})(1-q^{-2})(1-q^{-3})}.
\end{align}
The lower bound in \eqref{eq:probability-Bertini-satisfied} approaches $1$ as $q$ gets larger, and it is already very close to $1$ even for small values of $q$. For example, for $q=3$
\begin{align*}
1-\frac{7.5\left(3^{-1}+3^{-2}-3^{-3}\right)^{3^2+3+1}}{(1-3^{-1})(1-3^{-2})(1-3^{-3})}= 0.99988803...
\end{align*}
In other words, a random smooth plane curve over $\bF_3$ admits a transverse $\bF_3$-line with probability at least $0.9998$. For larger values of $q$, this probability is even closer to $1$. However, for $q=2$, the bound \eqref{eq:probability-Bertini-satisfied} only gives about $0.1485$ as a lower bound for the probability of a random smooth curve over $\bF_2$ to admit a transverse $\bF_2$-line.

The paper is organized as follows. In Section~\ref{section:lower-bound}, we obtain the lower bound for $\underline{\mu}(\mathcal{F})$, which will prove the first half of Theorem~\ref{thm:smooth-proportion-transverse-free} once we show $\mu(\mathcal{F})$ exists. The key inputs here are Poonen's Bertini theorem \cite{Poo04} and Schrijver's lower bound on the number of perfect matchings in a regular bipartite graph \cite{Sch98}. In Section~\ref{section:independence}, we compute the density of curves with various singularity and tangency conditions. This section provides many examples of sets $\mathcal{A}$ where the natural density $\mu(\mathcal{A})$ can be computed explicitly. In addition, we show in Lemma~\ref{lemma:super-independence} that the density of an intersection can often be computed by taking a product of densities. In Section~\ref{section:upper-bound}, we borrow the results developed in Section~\ref{section:independence} to obtain the upper bound for $\overline{\mu}(\mathcal{F})$. The proof of the upper bound is achieved via reduction to the case where the tangency points have bounded degree, together with a combinatorial technique to partition the desired set based on the locations of tangency points. Section~\ref{sect:mu-exists} is devoted to the proof that the natural density of $\mathcal{F}$ exists.

\section{Proof of the lower bound }\label{section:lower-bound}

The objective of this section is to prove the following.

\begin{theorem}\label{thm:lower-bound} Let $\mathcal{F}\subset R=\bF_q[x,y,z]$ denote the subset defining smooth transverse-free plane curves. Then
\begin{align*}
    \underline{\mu}(\mathcal{F})\geq e^{-(q^2+q+1)} \cdot \left(\frac{1}{q}+\frac{1}{q^2}-\frac{1}{q^3}\right)^{q^2+q+1}.
\end{align*}
\end{theorem}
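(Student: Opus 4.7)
The plan is to produce a disjoint family of subsets $\{\mathcal{A}_M\}_M \subset \mathcal{F}$ indexed by perfect matchings of the $(q+1)$-regular bipartite Levi incidence graph $\Gamma$ of $\bP^2(\bF_q)$, whose sum of densities lower-bounds $\underline{\mu}(\mathcal{F})$. The graph $\Gamma$ has vertex parts $\bP^2(\bF_q)$ and $(\bP^2)^{\vee}(\bF_q)$, each of size $q^2+q+1$, and edges given by incidence. For each perfect matching $M: (\bP^2)^{\vee}(\bF_q) \to \bP^2(\bF_q)$ with $M(L) \in L$, define
\[
\mathcal{A}_M := \{ f \in R_{\homog} : C = \{f=0\} \text{ is smooth and } T_{M(L)} C = L \text{ for all } L \}.
\]
Each $\mathcal{A}_M \subset \mathcal{F}$, and the sets are pairwise disjoint: for smooth $f$ the map $\tau_f(P) := T_P C$ is well-defined on $C(\bF_q)$, so if $f \in \mathcal{A}_M$ then $C$ contains every $\bF_q$-point and $M = \tau_f^{-1}$, showing $M$ is recovered from $f$. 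Hence $\underline{\mu}(\mathcal{F}) \geq \sum_M \mu(\mathcal{A}_M)$.

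Each $\mu(\mathcal{A}_M)$ is computed using Poonen's Bertini theorem with Taylor conditions on the first-order thickening $Z$ of the $\bF_q$-points. At each $\bF_q$-point $P$, setting $L_P = M^{-1}(P)$ and letting $\ell_{L_P}$ denote a defining linear form, the admissible 1-jets are those satisfying $f(P) = 0$ and $\nabla f(P) = c\, \ell_{L_P}$ for some nonzero $c$; there are $q-1$ such jets out of the $q^3$ total. Poonen's theorem then yields
\[
\mu(\mathcal{A}_M) = \left(\frac{q-1}{q^3}\right)^{q^2+q+1}\prod_{\substack{P\text{ closed}\\\deg P \geq 2}}\left(1-q^{-3\deg P}\right),
\]
which is independent of $M$ by the $\PGL_3(\bF_q)$-symmetry of $\Gamma$. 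To count matchings, Schrijver's lower bound on the permanent of the biadjacency matrix of a $(q+1)$-regular bipartite graph on $2(q^2+q+1)$ vertices gives $\#\{\text{PMs of }\Gamma\} \geq \left(q^q/(q+1)^{q-1}\right)^{q^2+q+1}$. The elementary estimate $(1+1/q)^q \leq e$ implies $q^q/(q+1)^{q-1} \geq (q+1)/e$, providing the factor $e^{-(q^2+q+1)}$ that appears in the theorem.

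The main obstacle will be the final algebraic simplification. After multiplying Schrijver's bound against $\mu(\mathcal{A}_M)$, one must rearrange the Poonen local factor $((q-1)/q^3)^{q^2+q+1}$ and the smoothness product $\prod_{\deg P \geq 2}(1-q^{-3\deg P})$, using Poonen's full zeta identity $\zeta_{\bP^2}(3)^{-1} = (1-q^{-1})(1-q^{-2})(1-q^{-3})$ and the factorization $q^3 - 1 = (q-1)(q^2+q+1)$, so that the base $(q^2+q-1)/q^3 = 1/q + 1/q^2 - 1/q^3$ appears as the per-line factor in the theorem's bound. This bookkeeping is delicate but purely algebraic, and gives the desired lower bound on $\underline{\mu}(\mathcal{F})$.
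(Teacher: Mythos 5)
Your overall strategy is identical to the paper's: index a disjoint family of transverse-free curves by perfect matchings of the Levi graph, compute each piece's density via Poonen's Bertini with Taylor conditions, and lower-bound the number of matchings using the Schrijver/Laurent--Schrijver permanent theorem. The gap is in the last step, which you describe as delicate but purely algebraic: as written, it does not close. You compute $\mu(\mathcal{A}_M) = \left(\frac{q-1}{q^3}\right)^{q^2+q+1}\prod_{\deg P \ge 2}\left(1-q^{-3\deg P}\right)$, and multiplying by the Schrijver bound $\left((q+1)/e\right)^{q^2+q+1}$ gives a lower bound of
\begin{align*}
e^{-(q^2+q+1)}\left(\frac{q^2-1}{q^3}\right)^{q^2+q+1}\prod_{\deg P \ge 2}\left(1-q^{-3\deg P}\right).
\end{align*}
For this to dominate the theorem's $e^{-(q^2+q+1)}\left(\frac{q^2+q-1}{q^3}\right)^{q^2+q+1}$ you would need
\begin{align*}
\left(\frac{q^2-1}{q^2+q-1}\right)^{q^2+q+1}\prod_{\deg P \ge 2}\left(1-q^{-3\deg P}\right) \ge 1,
\end{align*}
but both factors on the left are strictly less than $1$ for every $q\ge 2$, so the inequality fails. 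Your argument does produce a valid lower bound on $\underline{\mu}(\mathcal{F})$, just a strictly weaker one than what the theorem asserts, and no amount of rearranging $q^3-1 = (q-1)(q^2+q+1)$ against the zeta product will repair a quantity that is already too small.

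The reason the paper's version closes is that its Lemma computing the per-correspondence density reports a different (larger) value: $q^{-2(q^2+q+1)}\zeta_{\bP^2}(3)^{-1}$, obtained by using the full $q$-element linear space of admissible jets at each $\bF_q$-point (including the zero jet) and pairing it with $\zeta_{\bP^2}(3)^{-1}$ rather than with $\zeta_U(3)^{-1}$ for the punctured plane. That value exceeds yours by the factor $\left((q^2+q+1)/q^2\right)^{q^2+q+1}$, and with it the final algebraic step reduces to the paper's technical inequality $\left(1+\frac{1}{q^2+q-1}\right)^{q^2+q+1}(1-q^{-1})(1-q^{-2})(1-q^{-3}) \ge 1$, which the paper verifies for all $q\ge 2$. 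You should reconcile your application of Poonen's Theorem 1.2 with the paper's Lemma computing the per-correspondence density: as it stands your (more literal) reading of Poonen yields a density that is too small for the stated bound, and this discrepancy — not the algebraic bookkeeping — is the real obstacle.
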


Our task is to produce a positive proportion of smooth transverse-free curves of degree $d$. Recall that a smooth plane curve $C$ is transverse-free if each of the $q^2+q+1$ lines defined over $\bF_q$ is tangent to $C$ at some geometric point (i.e. a point defined over $\overline{\bF}_q$). By requiring each of the tangency points to be an $\bF_q$-point of $C$, we will already obtain a required positive proportion. 

Our strategy consists of two steps:

\begin{enumerate}
    \item\label{strategy:permanents} We can order $\bF_q$-points $\{P_i\}_{i=1}^{q^2+q+1}$  and $\bF_q$-lines $\{L_i\}_{i=1}^{q^2+q+1}$ in $\bP^2$ so that $P_i\in L_i$ for each $1\leq i\leq q^2+q+1$.
    \item\label{strategy:poonen} We use \cite{Poo04}*{Theorem 1.2} to compute the natural density of smooth plane curves whose tangent line at $P_i$ is $L_i$ for each $1\leq i\leq q^2+q+1$. These curves are transverse-free by construction.
\end{enumerate}
Section~\ref{subsect:perm} is devoted to understanding (\ref{strategy:permanents}), which is a purely combinatorial statement about $\bP^2(\bF_q)$. In Section~\ref{subsect:constructing-transverse-free-curves}, we explain how to carry out the procedure described in (\ref{strategy:poonen}). 

\subsection{Permanent of the projective plane.}\label{subsect:perm} An enumeration of $\bF_q$-points $\{P_1, \ldots, P_{q^2+q+1}\}$ and $\bF_q$-lines $\{L_1, \ldots, L_{q^2+q+1}\}$ such that $P_i\in L_i$ for each $1\leq i\leq q^2+q+1$ will be called a \textbf{perfect point-line correspondence}. One can use Hall's matching theorem to prove that such a correspondence exists~\cite{CR04}*{Theorem 1.1}. More generally, in the paper~\cite{CR04}, De Concini and Reichstein study nesting maps of Grassmannians of complementary dimensional subspaces.

In order to produce many transverse-free smooth plane curves, we are interested in constructing many different perfect point-line correspondences. We are naturally lead to the following purely combinatorial question.

\begin{question}\label{quest:point-line}
How many perfect point-line correspondences are there in $\bP^2(\bF_q)$?
\end{question}

We can phrase Question~\ref{quest:point-line} in terms of perfect matchings of a certain bipartite graph known as the \textbf{Levi graph} or \textbf{incidence graph}. The Levi graph of $\bP^2(\bF_q)$ is the bipartite graph whose vertices correspond to the $q^2+q+1$ points and $q^2+q+1$ lines of $\bP^2(\bF_q)$, and vertices corresponding to point $P$ and line $L$ are adjacent if $P\in L$. It follows that a perfect point-line correspondence is precisely the same as a perfect matching in the Levi graph. Note that the Levi graph is regular since each vertex has degree $q+1$.

The number of perfect matchings in a bipartite graph can be computed by taking the permanent of the incidence matrix. Recall that the permanent of an $n\times n$ square matrix $M=(m_{ij})$ is 
\begin{align*}
\operatorname{per}(M)\colonequals \sum_{\sigma\in S_n} \prod_{i=1}^{n} m_{i\sigma(i)}.
\end{align*} 
In the case of the Levi graph, the incidence matrix $M_{q^2+q+1}$ is a $(q^2+q+1)\times (q^2+q+1)$ matrix where the rows are indexed by points and columns are indexed by lines of $\bP^2(\bF_q)$. The $(i, j)$-th entry of $M_{q^2+q+1}$ is $0$ or $1$, depending on whether the $i$-th point lies on the $j$-th line. It follows that the permanent $\operatorname{per}(M_{q^2+q+1})$ exactly counts the number of perfect point-line correspondences in $\bP^2(\bF_q)$. We refer to $\operatorname{per}(M_{q^2+q+1})$ as the \textbf{permanent of the projective plane of order $q$}.

One can calculate these permanents for $q=2, 3, 4$ and $5$ in a computer, and the results are:
\begin{align*}
    \operatorname{per}(M_{7})&=24 \\  
    \operatorname{per}(M_{13})&=3852 \\
    \operatorname{per}(M_{21})&=18534400 \\
    \operatorname{per}(M_{31})&=4598378639550.
\end{align*}
This sequence appears as A000794 in Sloane's on-line encyclopedia of integer sequences \cite{OEIS} under the name of ``Permanent of projective plane of order $n$". To our knowledge, these permanents beyond the case $q=5$ have not been calculated explicitly.

While the exact answer to Question~\ref{quest:point-line} seems to be known only for $q\in\{2,3,4,5\}$, we can provide the following lower bound that works for all $q\geq 2$. 

\begin{proposition}\label{prop:lower-bound-permanent} Let $q$ be a prime power. The permanent of the projective plane of order $q$ is at least 
\begin{align*}
    \left(\frac{q+1}{e}\right)^{q^2+q+1}.
\end{align*}
In particular, this is a lower bound on the number of perfect point-line correspondences in $\bP^2(\bF_q)$.
\end{proposition}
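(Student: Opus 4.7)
The plan is to apply Schrijver's theorem on permanents of $k$-regular bipartite graphs, using the Levi graph identification already set up in the text. Recall that Schrijver's theorem states that any $k$-regular bipartite graph with $n$ vertices on each side admits at least
\[
    \left(\frac{(k-1)^{k-1}}{k^{k-2}}\right)^{n}
\]
perfect matchings. In our situation, the Levi graph of $\bP^2(\bF_q)$ has $n = q^2+q+1$ vertices on each side (points on one side, lines on the other), and it is $(q+1)$-regular because every $\bF_q$-point of $\bP^2$ lies on exactly $q+1$ $\bF_q$-lines and every $\bF_q$-line contains exactly $q+1$ $\bF_q$-points. Since $\operatorname{per}(M_{q^2+q+1})$ counts perfect matchings in this graph, plugging $k=q+1$ into Schrijver's bound yields
\[
    \operatorname{per}(M_{q^2+q+1}) \geq \left(\frac{q^{q}}{(q+1)^{q-1}}\right)^{q^2+q+1}.
\]

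The remaining task is the purely analytic comparison between $\frac{q^q}{(q+1)^{q-1}}$ and $\frac{q+1}{e}$. A brief calculation shows that the inequality
\[
    \frac{q^{q}}{(q+1)^{q-1}} \geq \frac{q+1}{e}
\]
is equivalent to $\left(1+\tfrac{1}{q}\right)^{q} \leq e$, which is the classical monotone convergence $\left(1+\tfrac{1}{q}\right)^{q} \nearrow e$. This is valid for every $q \geq 1$, and raising both sides of the comparison to the power $q^2+q+1$ completes the proof.

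I do not expect any serious obstacle here: the content of the proposition is really a packaging of Schrijver's theorem together with an elementary estimate, and the combinatorial identification of the Levi graph with a regular bipartite graph is the key bookkeeping step. The only thing worth being careful about is to state Schrijver's bound in the correct form (with the exponent $k-2$ in the denominator, not $k-1$), so that the algebra lines up cleanly with the target expression.
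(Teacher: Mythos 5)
Your proof follows the same route as the paper: identify the count of perfect point-line correspondences with the permanent of the $(q+1)$-regular Levi graph's incidence matrix, apply Schrijver's lower bound with $k=q+1$ and $n=q^2+q+1$, and then reduce the remaining comparison $\frac{q^q}{(q+1)^{q-1}} \geq \frac{q+1}{e}$ to the classical fact that $\left(1+\frac{1}{q}\right)^q \leq e$. The algebra checks out and the argument is complete.
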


Before we discuss the proof, we recall van der Waerden's conjecture on permanents. A matrix is called \textbf{doubly-stochastic} if it has non-negative entries and each row and column sums up to $1$. Van der Waerden's conjecture was that if $A$ is an $n\times n$ doubly-stochastic matrix then $\operatorname{per}(A)\geq n! / n^n$. The conjecture was proved independently by Gyires~\cite{Gyi80}, Egorychev~\cite{Ego81} and Falikman~\cite{Fal81}. Almost three decades later, Gurvitz \cite{Gur08} gave a simpler proof, and his proof technique was explored further by Laurent and Schrijver \cite{MS10}.  In particular,  Laurent and Schrijver gave a short proof of the following theorem.

\begin{theorem}\label{thm:schrijver} Suppose that $A$ is an $n\times n$ integer matrix with non-negative entries. If all row and column sums of $A$ are equal to $k$, then
$$
\operatorname{per}(A) \geq \left(\frac{(k-1)^{k-1}}{k^{k-2}}\right)^{n}.
$$
\end{theorem}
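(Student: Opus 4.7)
The plan is to follow the Gurvits--Laurent--Schrijver approach via capacities of real-stable polynomials. To $A = (a_{ij})$ I associate the polynomial
\[
p_A(x_1,\ldots,x_n) \;=\; \prod_{i=1}^{n}\Big(\sum_{j=1}^n a_{ij}\, x_j\Big) \;\in\; \mathbb{R}[x_1,\ldots,x_n],
\]
which has non-negative coefficients. Expanding the product, the coefficient of $x_1 x_2 \cdots x_n$ equals $\operatorname{per}(A)$, and so
\[
\operatorname{per}(A) \;=\; \partial_{x_1}\partial_{x_2}\cdots\partial_{x_n}\, p_A \,\big|_{x_1 = \cdots = x_n = 0}.
\]
As a product of linear forms with non-negative coefficients, $p_A$ is real-stable; this property is preserved by the operations $\partial_{x_m}$ and restriction $x_m = 0$ used below.

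Next, I introduce the \emph{capacity} $\operatorname{Cap}(p) = \inf_{x > 0}\, p(x) / (x_1 \cdots x_n)$ of a polynomial with non-negative coefficients. Using the row-sum condition $\sum_j a_{ij} = k$ and weighted AM--GM with weights $a_{ij}/k$ (which are non-negative and sum to $1$),
\[
\sum_j a_{ij}\, x_j \;=\; k \sum_j \tfrac{a_{ij}}{k}\, x_j \;\geq\; k \prod_j x_j^{a_{ij}/k};
\]
multiplying these inequalities over $i$ and applying the column-sum condition $\sum_i a_{ij} = k$ yields $p_A(x) \geq k^n \prod_j x_j$, with equality at $x = (1,\ldots,1)$. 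Hence $\operatorname{Cap}(p_A) = k^n$.

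The technical heart of the argument is a capacity-preservation inequality: if $p$ is a real-stable polynomial in $m$ variables with non-negative coefficients and $\deg_{x_m}(p) \leq k$, then
\[
\operatorname{Cap}\bigl(\partial_{x_m} p \,\big|_{x_m = 0}\bigr) \;\geq\; \Big(\frac{k-1}{k}\Big)^{k-1} \operatorname{Cap}(p).
\]
I expect this to be the main obstacle. The strategy is to fix all variables other than $x_m$ at a capacity-optimizing point, reducing to a univariate polynomial whose roots are all real by stability, and then to optimize the ratio of its linear to constant coefficients subject to the degree constraint; the factor $(1-1/k)^{k-1}$ arises from a Lagrange-multiplier calculation on that univariate problem.

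To finish, I iterate. Eliminating the variables one at a time, at each step the current polynomial remains real-stable with non-negative coefficients, and its degree in each remaining variable stays bounded by $k$: the initial bound $\deg_{x_j}(p_A) \leq k$ holds by the column-sum condition, and neither differentiation nor restriction at $0$ raises degrees. After $n$ steps we arrive at the constant $\operatorname{per}(A)$, whose capacity (as a polynomial in zero variables) is itself. Combining the $n$ capacity reductions with $\operatorname{Cap}(p_A) = k^n$ gives
\[
\operatorname{per}(A) \;\geq\; k^n \cdot \Big(\tfrac{k-1}{k}\Big)^{n(k-1)} \;=\; \Big(\tfrac{(k-1)^{k-1}}{k^{k-2}}\Big)^n,
\]
which is the desired bound.
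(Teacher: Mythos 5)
Your proposal reconstructs the Gurvits--Laurent--Schrijver capacity argument to which the paper attributes this theorem; the paper itself only cites it and does not reproduce a proof. Your setup is correct: $\operatorname{per}(A)$ is the coefficient of $x_1\cdots x_n$ in the product polynomial $p_A$, the weighted AM--GM calculation gives $\operatorname{Cap}(p_A)=k^n$, integrality of $A$ together with the column-sum condition forces $\deg_{x_j}(p_A)\leq k$, and iterating the capacity-preservation inequality yields exactly the stated bound. The one substantive gap, which you flag honestly, is that you state but do not prove the capacity-preservation inequality $\operatorname{Cap}\bigl(\partial_{x_m}p\,|_{x_m=0}\bigr)\geq\bigl(\tfrac{k-1}{k}\bigr)^{k-1}\operatorname{Cap}(p)$; this is where essentially all of the mathematical content of the theorem resides, so a complete write-up must supply it. Two details to handle carefully there: the capacity infimum need not be attained, so the standard proof bounds $\partial_{x_m}p(y,0)/(y_1\cdots y_{m-1})$ for arbitrary $y\in\mathbb{R}_{>0}^{m-1}$ rather than fixing a hypothetical optimizing point; and the exact factor in Gurvits's inequality is $g(\deg_{x_m}p)$ with $g(j)=\bigl((j-1)/j\bigr)^{j-1}$, which is decreasing in $j$, so one uses $\deg_{x_m}p\leq k$ to conclude $g(\deg_{x_m}p)\geq g(k)$, together with $\operatorname{per}(A)>0$ (a Hall-type argument on the support of $A$) to ensure no variable's degree drops to zero mid-iteration.
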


The above theorem and its proof appear in \cite{MS10}*{Corollary 1C}. We remark that Theorem~\ref{thm:schrijver} was first proved by Schrijver~\cite{Sch98} using a different argument.

\begin{proof}[Proof of Proposition~\ref{prop:lower-bound-permanent}] Let $M_{q^2+q+1}$ be the incidence matrix of the projective plane of order $q$. Since each point is contained in $q+1$ lines, and each line contains $q+1$ points, it follows that all row and column sums of $M_{q^2+q+1}$ equal to $q+1$. By applying Theorem~\ref{thm:schrijver}, we obtain the lower bound
\begin{align*}
\operatorname{per}(M_{q^2+q+1}) \geq \left(\frac{q^{q}}{(q+1)^{q-1}}\right)^{q^2+q+1} &= (q+1)^{q^2+q+1} \left(\frac{q}{q+1}\right)^{q(q^2+q+1)} \\
&\geq \left(\frac{q+1}{e}\right)^{q^2+q+1} 
\end{align*}
where we used the fact that $\left(\frac{q}{q+1}\right)^{q}\geq e^{-1}$. \end{proof}

\subsection{Constructing transverse-free curves.}\label{subsect:constructing-transverse-free-curves} The Proposition~\ref{prop:lower-bound-permanent} provides us with plenty of perfect point-line correspondences in $\bP^2(\bF_q)$. As explained in the beginning of the section, each perfect point-line correspondence gives rise to a positive proportion of smooth transverse-free plane curves. 

\begin{lemma}\label{lemma:lower-bound-for-single-point-line}
Given a perfect point-line correspondence $\{P_i\}_{i=1}^{q^2+q+1}$ and $\{L_i\}_{i=1}^{q^2+q+1}$, the natural density of smooth plane curves which are tangent to $L_i$ at the point $P_i$ is:
\begin{align*}
\left(1-\frac{1}{q}\right)\left(1-\frac{1}{q^2}\right)\left(1-\frac{1}{q^3}\right) q^{-2(q^2+q+1)}.
\end{align*}
\end{lemma}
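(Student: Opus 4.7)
The plan is to apply Poonen's Bertini theorem with prescribed Taylor conditions, \cite{Poo04}*{Theorem 1.2}, to the finite subscheme $Z = \bigsqcup_{i=1}^{q^2+q+1} Z_i \subset \bP^2$, where $Z_i = \spec(\mathcal{O}_{\bP^2,P_i}/\mathfrak{m}_{P_i}^2)$ is the first-order infinitesimal neighborhood of $P_i$. Each $H^0(Z_i, \mathcal{O}_{Z_i})$ is the three-dimensional $\bF_q$-vector space of 1-jets at $P_i$, so $|H^0(Z,\mathcal{O}_Z)| = q^{3(q^2+q+1)}$.

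The tangency condition that the curve $\{f=0\}$ meets $L_i$ with multiplicity at least two at $P_i$ translates into two linear conditions on the 1-jet of $f$ at $P_i$. Indeed, in affine coordinates $(u,v)$ at $P_i$ with $L_i$ given locally by $v=0$, these conditions are $f(P_i)=0$ and $\partial_u f(P_i)=0$; they cut out a one-dimensional $\bF_q$-subspace $T_i \subset H^0(Z_i,\mathcal{O}_{Z_i})$ of cardinality $q$. A jet $(0,0,c) \in T_i$ corresponds to a smooth point of the curve precisely when $c \neq 0$. Applying Poonen's theorem with $T = \prod_i T_i$ yields
$$\frac{|T|}{|H^0(Z,\mathcal{O}_Z)|} \cdot \zeta_{\bP^2 \setminus Z}(3)^{-1} = q^{-2(q^2+q+1)} \cdot \zeta_{\bP^2 \setminus Z}(3)^{-1}$$
as the natural density of $f$ for which $\{f=0\}$ is smooth on $\bP^2 \setminus Z$ and $f|_{Z_i} \in T_i$ for every $i$.

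The final step is to upgrade ``smooth on $\bP^2\setminus Z$'' to ``smooth everywhere'' and to simplify. Using the Euler product, $\zeta_{\bP^2\setminus Z}(3)^{-1} = \zeta_{\bP^2}(3)^{-1}\cdot(1-q^{-3})^{-(q^2+q+1)}$; meanwhile, smoothness at $P_i$ is automatic for curves whose jet at $P_i$ lies in $T_i$ with nonzero normal derivative, and only the zero jet $(0,0,0) \in T_i$ yields a singular $P_i$. Removing these bad jets via inclusion-exclusion over subsets $S \subseteq \{1,\ldots,q^2+q+1\}$ of forced singular points—where each restricted case is again computable by Poonen's theorem with the zero jet imposed at $P_i$ for $i \in S$—produces an alternating binomial sum. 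The main obstacle is verifying that this combinatorial sum combines with the $(1-q^{-3})^{-(q^2+q+1)}$ zeta-function correction to collapse into the clean expression $(1-q^{-1})(1-q^{-2})(1-q^{-3}) \cdot q^{-2(q^2+q+1)}$ stated in the lemma.
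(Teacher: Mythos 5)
Your setup via Poonen's Theorem 1.2 is essentially the same as the paper's, but you are more scrupulous about two points the paper glosses over: the zeta factor in that theorem is $\zeta_{\bP^2\setminus Z}(3)^{-1}$, not $\zeta_{\bP^2}(3)^{-1}$; and the one-dimensional linear subspace $T_i$ contains the zero jet, which corresponds to a singular point at $P_i$ and so must be excluded. The gap is precisely the step you defer. Carrying out the inclusion--exclusion you outline (or, equivalently, applying Poonen's theorem directly with the non-linear subset $T_i' = \{(0,0,c): c\neq 0\}$ of size $q-1$, whose jets already force smoothness at $P_i$), one finds, writing $N = q^2+q+1$,
\begin{align*}
\left(\frac{q-1}{q^3}\right)^N \zeta_{\bP^2\setminus Z}(3)^{-1}
\;=\; \left(\frac{q-1}{q^3}\right)^N\frac{\zeta_{\bP^2}(3)^{-1}}{(1-q^{-3})^N}
\;=\; \left(\frac{q-1}{q^3-1}\right)^N\zeta_{\bP^2}(3)^{-1}
\;=\; (q^2+q+1)^{-N}\,\zeta_{\bP^2}(3)^{-1}.
\end{align*}
This is not the value $q^{-2N}\zeta_{\bP^2}(3)^{-1}$ stated in the lemma; the two differ by the factor $\left(q^2/(q^2+q+1)\right)^N < 1$. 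So the ``main obstacle'' you describe is not a routine verification: the combinatorial sum does not collapse to the stated expression. Your careful bookkeeping in fact surfaces an apparent mismatch with the paper's argument, which invokes Theorem 1.2 with $\zeta_{\bP^2}(3)^{-1}$ where the theorem's statement has $\zeta_U(3)^{-1}$ for $U=\bP^2\setminus Z$, and with a $T_i$ that includes the singular jet; neither of these is compatible with the intended set of everywhere-smooth curves. You should not assert the collapse without resolving this discrepancy.
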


\begin{proof} We use Poonen's Bertini Theorem with Taylor conditions \cite{Poo04}*{Theorem 1.2}, which computes the density of smooth plane curves over $\bF_q$ with any given finitely many tangency conditions. Given $P_i=[a_i:b_i:c_i]$, consider the subscheme $Z_i\subset\mathbb{P}^2$ defined by
\begin{align*}
    Z_i = \spec\left( \frac{\mathbb{F}_q[x,y,z]}{(c_i x-a_i z, c_i y-b_iz)^2}\right)
\end{align*}
for each $1\leq i\leq q^2+q+1$. In other words, $Z_i$ is a subscheme whose support is the point $P_i=[a_i : b_i : c_i]$, and the non-reduced structure keeps track of the first order tangent information. Next, we consider the subscheme $Z\subset\bP^2$ defined by
\begin{align*}
Z = \coprod_{i=1}^{q^2+q+1} Z_i.
\end{align*}
Note that the global sections $H^{0}(Z, \mathcal{O}_Z)$ can be expressed as a direct sum:
\begin{align*}
    H^0(Z, \mathcal{O}_Z) = \bigoplus_{i=1}^{q^2+q+1} H^{0}(Z_i, \mathcal{O}_{Z_i}).
\end{align*}
Since $H^{0}(Z_i, \mathcal{O}_{Z_i})$ is a $3$-dimensional vector space over $\bF_q$, it follows that $H^{0}(Z, \mathcal{O}_Z)$ has dimension $3(q^2+q+1)$ as an $\bF_q$-vector space. 

For each $1\leq i\leq q^2+q+1$, let $T_i\subset H^0(Z_i, \mathcal{O}_{Z_i})$ denote the linear subspace corresponding to picking the particular line $L_i$ to be the tangent direction at $P_i$. Note that $T_i$ has codimension $2$ inside $H^0(Z_i,\mathcal{O}_{Z_i})$. In particular, $\dim_{\bF_q} T_i=1$. Letting $T=\bigoplus_{i=1}^{q^2+q+1} T_i$, we see that $T\subset H^{0}(Z,\mathcal{O}_Z)$ and $\dim_{\bF_q}T=q^2+q+1$. Finally, consider the set described in the statement of the lemma,
\begin{align*}
\mathcal{P} = \{f\in \bF_q[x,y,z] \ | \ C=\{f=0\} \text{ is smooth, and has tangent line } L_i \text{ at } P_i \}.
\end{align*}
By \cite{Poo04}*{Theorem 1.2}, we have
\begin{align*}
    \mu(\mathcal{P}) = \frac{\#T}{\# H^0(Z, \mathcal{O}_Z)} \zeta_{\bP^2}(3)^{-1} &= \left(\frac{q}{q^3}\right)^{q^2+q+1} \zeta_{\bP^2}(3)^{-1} \\
    &=\left(1-\frac{1}{q}\right)\left(1-\frac{1}{q^2}\right)\left(1-\frac{1}{q^3}\right) q^{-2(q^2+q+1)}
\end{align*}
as desired. \end{proof}

Before we proceed with the proof of Theorem~\ref{thm:lower-bound}, we establish the following technical inequality.

\begin{lemma}\label{lemma:technical-inequality}
For each $q\geq 2$, we have
\begin{align*}
(q+1)^{q^2+q+1}\left(1-\frac{1}{q}\right)\left(1-\frac{1}{q^2}\right)\left(1-\frac{1}{q^3}\right)\geq  q^{2(q^2+q+1)} \left(\frac{1}{q}+\frac{1}{q^2}-\frac{1}{q^3}\right)^{q^2+q+1}.
\end{align*}
\end{lemma}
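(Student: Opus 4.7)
The plan is to reduce the stated exponential inequality to the classical bound $(1+1/N)^{N+1}\geq e$, after which only a straightforward polynomial inequality in $q$ remains to be checked.

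First I would rewrite the inequality by dividing both sides by $q^{2(q^2+q+1)}$ and simplifying. Using $\frac{1}{q}+\frac{1}{q^2}-\frac{1}{q^3}=\frac{q^2+q-1}{q^3}$ and $q(q+1)=q^2+q$, and absorbing the three factors $(1-1/q^i)$ on the right, the inequality becomes equivalent to
\[
\left(\frac{q^2+q}{q^2+q-1}\right)^{q^2+q+1} \;\geq\; \frac{q^6}{(q-1)(q^2-1)(q^3-1)}.
\]

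Second, I would apply the standard fact that the sequence $\{(1+1/N)^{N+1}\}_{N\geq 1}$ is strictly decreasing with limit $e$, so $(1+1/N)^{N+1}\geq e$ for every $N\geq 1$. Taking $N\colonequals q^2+q-1$ (which satisfies $N\geq 5$ for $q\geq 2$), the left side above is $(1+1/N)^{N+2}$, and separating one extra factor of $(1+1/N)$ gives
\[
\left(1+\frac{1}{N}\right)^{N+2}\;=\;\left(1+\frac{1}{N}\right)^{N+1}\cdot\left(1+\frac{1}{N}\right)\;\geq\; e\cdot\frac{N+1}{N}\;=\; e\cdot\frac{q^2+q}{q^2+q-1}.
\]
So it suffices to establish the polynomial inequality
\[
e\,(q^2+q)(q-1)(q^2-1)(q^3-1)\;\geq\; q^6(q^2+q-1).
\]

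Third and finally, I would verify this polynomial inequality directly. Both sides are polynomials in $q$ of degree $8$: after expansion the left has leading coefficient $e$ while the right has leading coefficient $1$, so the inequality is comfortable for $q$ large. I expect the main (mild) obstacle to be the base case $q=2$, where the left side equals $126\,e\approx 342.5$ and the right equals $320$, giving only modest slack; for $q\geq 3$ the difference of the two sides is overwhelmingly positive, and a one-line estimate on the leading coefficients (for instance, noting that $(e-1)q^8$ dominates the sum of all lower-order terms once $q\geq 3$) closes the argument.
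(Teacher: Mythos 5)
Your argument is correct, and it diverges from the paper's after the common first step. Both approaches rearrange the inequality to the same intermediate form $\left(1+\frac{1}{q^2+q-1}\right)^{q^2+q+1}(1-q^{-1})(1-q^{-2})(1-q^{-3})\geq 1$, but then bound the exponential factor differently. The paper keeps only the first two terms of the binomial expansion, $\left(1+\frac{1}{N}\right)^{N+2}\geq 1+\frac{N+2}{N}$ with $N=q^2+q-1$; this bound is too weak for $q=2$ (it yields $0.7875<1$), so the paper has to dispose of $q=2$ by direct evaluation of $h(2)$ before applying the binomial estimate at $q\geq 3$. You instead invoke the sharper classical fact $(1+1/N)^{N+1}\geq e$, which retains an extra multiplicative gain and reduces the whole problem to the single polynomial inequality $e(q^2+q)(q-1)(q^2-1)(q^3-1)\geq q^6(q^2+q-1)$, valid already at $q=2$ (namely $126e\approx 342.5\geq 320$). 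That is a cleaner target. The one thing worth tightening is your closing line: the phrase ``a one-line estimate on the leading coefficients'' should be made explicit. Expanding, the difference of the two sides is $(e-1)q^8-q^7+(1-2e)q^6-eq^5+eq^4+2eq^3-eq$; the negative contributions are $q^7+(2e-1)q^6+eq^5+eq$, and for $q\geq 3$ these are at most $\bigl(\tfrac13+\tfrac{2e-1}{9}+\tfrac{e}{27}+\tfrac{e}{3^7}\bigr)q^8\approx 0.928\,q^8$, which is indeed below $(e-1)q^8\approx 1.718\,q^8$. With that computation spelled out, your proof is complete and arguably slicker than the paper's, at the cost of appealing to the monotonicity of $(1+1/N)^{N+1}$ rather than to the more elementary binomial theorem.
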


\begin{proof}
After rearranging the terms, the desired inequality is equivalent to
\begin{align*}
    \left(\frac{q+1}{q}\right)^{q^2+q+1}\left(1-\frac{1}{q}\right)\left(1-\frac{1}{q^2}\right)\left(1-\frac{1}{q^3}\right) \geq \left(1+\frac{1}{q}-\frac{1}{q^2}\right)^{q^2+q+1}.
\end{align*}
After writing $1+\frac{1}{q}-\frac{1}{q^2}=\frac{q^2+q-1}{q^2}$, the previous inequality becomes
\begin{align*}
    \left(\frac{q+1}{q}\right)^{q^2+q+1}\left(1-\frac{1}{q}\right)\left(1-\frac{1}{q^2}\right)\left(1-\frac{1}{q^3}\right) \geq \left(\frac{q^2+q-1}{q^2}\right)^{q^2+q+1}.\end{align*}
After rearranging the terms again, we need to prove that
\begin{align}\label{eq:technical-inequality-key-function}
    h(q)\colonequals \left(1+\frac{1}{q^2+q-1}\right)^{q^2+q+1}\left(1-\frac{1}{q}\right)\left(1-\frac{1}{q^2}\right)\left(1-\frac{1}{q^3}\right) \geq 1.
\end{align} 
Our goal has been reduced to showing that $h(q)\geq 1$ for each $q\geq 2$. For $q=2$, one can check that
\begin{align*}
    h(2)=\left(\frac{6}{5}\right)^7 \left(\frac{1}{2}\right)\left(\frac{3}{4}\right)\left(\frac{7}{8}\right) = \frac{91854}{78125} = 1.1757312 \geq 1 
\end{align*}
as claimed. So, we will assume $q\geq 3$. Using the first two terms of the binomial theorem, we have
\begin{align*}
    h(q)\geq \left(1+\frac{q^2+q+1}{q^2+q-1}\right)\left(1-\frac{1}{q}\right)\left(1-\frac{1}{q^2}\right)\left(1-\frac{1}{q^3}\right).
\end{align*}
To prove $h(q)\geq 1$ for $q\geq 3$, it therefore suffices to prove the stronger inequality,
\begin{align}\label{eq:technical-inequality-intermediate-step}
    \left(1+\frac{q^2+q+1}{q^2+q-1}\right)\left(1-\frac{1}{q}\right)\left(1-\frac{1}{q^2}\right)\left(1-\frac{1}{q^3}\right) \geq 1,
\end{align}
which rearranges into
\begin{align*}
    2(q^2+q)(q-1)(q^2-1)(q^3-1)\geq (q^2+q-1)q^6.
\end{align*}
After simplification, we need to prove
\begin{align*}
    2(q^2-1)^2 (q^3-1) \geq (q^2+q-1)q^5
\end{align*}
for $q\geq 3$. This last inequality is equivalent to
\begin{align}\label{eq:technical-ineq-last-step}
    q^7 - q^6 - 3q^5 - 2 q^4 + 2 q^3 + 4 q^2 - 2 \geq 0.
\end{align}
It is clear that $2 q^3 + 4 q^2 - 2\geq 0$. Since we are assuming $q\geq 3$, we have
\begin{align*}
    q^7 \geq 3q^6 = q^6+2q^6\geq q^6+6q^5.
\end{align*}
Thus,
\begin{align*}
    q^7 - q^6 - 3q^5 - 2 q^4 \geq (q^6+6q^5) - q^6 - 3q^5 - 2 q^4 = 3q^5-2q^4>0.
\end{align*}
This proves the inequality~\eqref{eq:technical-ineq-last-step} for $q\geq 3$. Consequently, \eqref{eq:technical-inequality-intermediate-step} follows, and hence also \eqref{eq:technical-inequality-key-function}. \end{proof}

We are now ready to establish the lower bound for $\underline{\mu}(\mathcal{F})$.

\begin{proof}[Proof of Theorem~\ref{thm:lower-bound}] By Lemma~\ref{lemma:lower-bound-for-single-point-line}, each perfect point-line correspondence gives rise to a collection of smooth transverse-free curves with natural density equal to
\begin{align*}
\left(1-\frac{1}{q}\right)\left(1-\frac{1}{q^2}\right)\left(1-\frac{1}{q^3}\right) q^{-2(q^2+q+1)}.
\end{align*}
Now, there are at least $\left(\frac{q+1}{e}\right)^{q^2+q+1}$ perfect point-line correspondences by Proposition~\ref{prop:lower-bound-permanent}. Since we are considering smooth curves, two different perfect point-line correspondences give rise to disjoint sets of transverse-free curves. Thus
\begin{align}\label{eq:lower-bound-intermediate-step}
\underline{\mu}(\mathcal{F}) &\geq 
\underbrace{\left(\frac{q+1}{e}\right)^{q^2+q+1}}_{\text{number of correspondences}} \cdot  \underbrace{\left(1-\frac{1}{q}\right)\left(1-\frac{1}{q^2}\right)\left(1-\frac{1}{q^3}\right) q^{-2(q^2+q+1)}}_{\text{contribution from each correspondence}}.
\end{align}
By Lemma~\ref{lemma:technical-inequality},
\begin{align}\label{eq:technical-inequality}
    (q+1)^{q^2+q+1}\left(1-\frac{1}{q}\right)\left(1-\frac{1}{q^2}\right)\left(1-\frac{1}{q^3}\right)\geq q^{2(q^2+q+1)} \left(\frac{1}{q}+\frac{1}{q^2}-\frac{1}{q^3}\right)^{q^2+q+1} 
\end{align}
By combining~\eqref{eq:lower-bound-intermediate-step}~and~\eqref{eq:technical-inequality}, we obtain
\begin{align*}
\underline{\mu}(\mathcal{F}) &\geq e^{-(q^2+q+1)} \cdot \left(\frac{1}{q}+\frac{1}{q^2}-\frac{1}{q^3}\right)^{q^2+q+1}
\end{align*}
as desired. 
\end{proof}

\section{Computing natural density with examples}\label{section:independence}

In this section, we investigate the natural densities of various subsets of $R_{\homog}$. These results will be used in Section~\ref{section:upper-bound}. 

The following three subsets and their properties will be the focus of this section. The first few lemmas will compute the densities of these subsets. Given a point $Q\in\bP^2(\bF_q)$, we define
\begin{align*}
    \mathcal{S}_{Q} = \{ f\in R_{\homog} \ | \ C=\{f=0\} \text{ is singular at } Q \}.
\end{align*}
Given a line $L$ and a point $P\in\bP^2(\overline{\bF}_q)$ with $P\in L$, we define
\begin{align*}
\mathcal{T}_{L, P} = 
\{ f\in R_{\homog} \ | \ C=\{f=0\} \text{ is tangent to } L \text{ at } P \}.
\end{align*}
Note that the condition that $C$ is tangent to $L$ at $P$ includes the possibility that $C$ is singular at $P\in L$. Similarly, we define,
\begin{align*}
    \mathcal{T}_{L} \colonequals \bigcup_{P\in L} \mathcal{T}_{L, P} = \{ f\in R_{\homog} \ | \ C=\{f=0\} \text{ is not transverse to } L\}
\end{align*} 
Note that polynomials in $\mathcal{T}_L$ correspond to those plane curves $C$ for which $L$ is tangent to $C$ somewhere, or $L$ passes through a singular point of $C$.

To compute the density of the set $\mathcal{T}_{L}$, we first need a precise count on the number of binary forms (homogeneous polynomials in two variables) that have a repeated root on $\bP^1$. In other words, we need to count the binary forms over $\bF_q$ that are \emph{not} square-free. Note that, for convenience, we view $0$ as a binary form in every degree, and $0$ is \textit{not} square-free.

\begin{lemma}\label{lemma:binary-forms-count}
Let $d\geq 3$. Let $\bF_{q}[x,y]_{d}$ denote the vector space of degree $d$ binary forms in variables $x$ and $y$. Consider the subset
\begin{align*}
    B_{d} = \{ f\in \bF_{q}[x,y]_{d} \ | \ f \text{ has a double root in } \bP^1(\overline{\bF}_q)\}.
\end{align*}
Then $\# B_{d} = q^{d}+q^{d-1}-q^{d-2}$.
\end{lemma}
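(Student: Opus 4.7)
The plan is to count the squarefree binary forms of degree $d$ and subtract from the total $q^{d+1}$. Note that a nonzero binary form $f\in\bF_q[x,y]_d$, taken up to a nonzero scalar, corresponds to an effective divisor of degree $d$ on $\bP^1_{\bF_q}$; the form $f$ avoids $B_d$ precisely when this divisor is squarefree (multiplicity-free), and the convention that $0\in B_d$ is built into the total count.

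The main computational step is to determine the number $S_d$ of squarefree effective divisors of degree $d$ on $\bP^1_{\bF_q}$. For this I would invoke the standard Euler-product identity
\begin{equation*}
\sum_{d\geq 0} S_d T^d \;=\; \prod_{P\in|\bP^1|}\bigl(1+T^{\deg P}\bigr) \;=\; \frac{\zeta_{\bP^1}(T)}{\zeta_{\bP^1}(T^2)} \;=\; \frac{(1-T^2)(1-qT^2)}{(1-T)(1-qT)} \;=\; \frac{(1+T)(1-qT^2)}{1-qT},
\end{equation*}
where I have used $\zeta_{\bP^1}(T)=1/[(1-T)(1-qT)]$. To extract the coefficient of $T^d$, I would expand the right-hand side as the product $(1+T-qT^2-qT^3)\sum_{k\geq 0}q^k T^k$; a direct calculation gives $S_d = q^d + q^{d-1} - q\cdot q^{d-2} - q\cdot q^{d-3} = q^d - q^{d-2}$ whenever $d\geq 3$.

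Finally, each effective divisor of degree $d$ is cut out by exactly $q-1$ nonzero binary forms, so there are $(q-1)S_d = q^{d+1} - q^d - q^{d-1} + q^{d-2}$ nonzero squarefree forms of degree $d$. Subtracting from the total $q^{d+1}$ (and observing that $0\in B_d$ by convention) yields
\begin{equation*}
\#B_d \;=\; q^{d+1} - (q-1)(q^d - q^{d-2}) \;=\; q^d + q^{d-1} - q^{d-2},
\end{equation*}
as desired. There is no real obstacle; the only points to watch are the bookkeeping of scalars together with the zero form, and the role of the hypothesis $d\geq 3$, which is precisely what ensures that both the $-qT^2$ and the $-qT^3$ terms in the numerator contribute fully to the coefficient extraction (for $d=2$, one instead obtains $S_2=q^2$, breaking the closed form).
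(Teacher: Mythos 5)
Your proof is correct, and it takes a genuinely different route from the paper. The paper's argument is elementary and hands-on: it partitions $B_d\setminus\{0\}$ according to the index of the first nonzero coefficient, reduces $B_d^0$ and $B_d^1$ to the classical count $q^d - q^{d-1}$ of monic square-free polynomials in $\bF_q[t]$ of degree $d$, and observes that all remaining forms are divisible by $y^2$. Your argument instead counts square-free effective divisors on $\bP^1$ via the Euler-product identity $\sum_d S_d T^d = \zeta_{\bP^1}(T)/\zeta_{\bP^1}(T^2)$, extracts the coefficient of $T^d$, and then converts from divisors to forms by the factor $q-1$. What your approach buys is conceptual clarity and generalizability: it is manifestly coordinate-free, it explains the failure of the closed form for small $d$ as a boundary effect in the coefficient extraction, and the same template extends to square-free counts on any curve or higher-dimensional variety over $\bF_q$. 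What the paper's approach buys is self-containment: it needs only the well-known affine square-free count and elementary bookkeeping, with no appeal to zeta functions. Both are sound; yours is arguably the cleaner derivation once the identity $\prod_P(1+T^{\deg P}) = \zeta(T)/\zeta(T^2)$ is granted.
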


\begin{proof}
Let us partition $B_{d}$ as
\begin{align*}
    B_{d} \setminus \{0\} = \bigsqcup_{i=0}^{d} B^{i}_{d}
\end{align*}
where
\begin{align*}
    B^{i}_{d} = \{ f\in B_d \ | \ f = a_i x^{d-i} y^{i} + a_{i+1} x^{d-i-1} y^{i+1} + \cdots +   a_d y^{d} \text{ with } a_i\neq 0 \}.
\end{align*}
It is well-known that the number of square-free monic polynomials of degree $d$ in $\bF_q[t]$ is $q^{d}-q^{d-1}$. Thus, the number of monic polynomials in $\bF_q[t]$ of degree $d$ with a repeated root is $q^{d-1}$. As a result, the number of binary forms,
\begin{align*}
    a_0 x^{d} + a_{1} x^{d-1} y + \cdots +   a_d y^{d}
\end{align*}
with $a_0\neq 0$ and with a repeated root is exactly $q^{d-1}\cdot (q-1)$, where the factor of $(q-1)$ is needed for scaling by $a_0\in \bF_{q}^{\ast}$. This proves that $\# B_d^{0}= q^{d}-q^{d-1}$. When $a_0=0$, we have
\begin{align*}
    a_{1} x^{d-1} y + \cdots +   a_d y^{d}
\end{align*}
If $a_1\neq 0$, we are looking at the elements of $B_{d}^{1}$. After factoring out $y$, we apply previous argument to get $\#B_{d}^{1}=q^{d-2}\cdot(q-1)=q^{d-1}-q^{d-2}$, where we used the assumption that $d\geq 3$. 

Note that if $a_0=0$ and $a_1=0$, then the binary form is not square-free for $d\geq 2$, because it would already have a factor of $y^2$. As a result, we have
\begin{align*}
    \# \left(\{0\}\cup \bigsqcup_{i=2}^{d} B_{d}^{i}\right) = q^{d-1}.
\end{align*}
Therefore for $d\geq 3$,
\begin{align*}
    \# B_d = \underbrace{(q^d-q^{d-1})}_{\# B_{d}^{0}}+\underbrace{(q^{d-1}-q^{d-2})}_{\# B_{d}^{1}}+q^{d-1}
    = q^{d}+q^{d-1}-q^{d-2}
\end{align*}
as desired.
\end{proof}

\begin{remark}
While have computed the exact formula for $\#B_d$ for each $d\geq 3$, we will only need this result for $d$ sufficiently large. For the sake of completeness, let us say a few words about the cases $d=1$ and $d=2$. We have $B_1=\{0\}$ and so $\#B_1=1$. Moreover, $\#B_2=q^2$ because $B_{2}$ consists of all quadratic forms $a(s x + ty)^2$ where $a\in\bF_q^{\ast}$ and $[s:t]\in\bP^1(\bF_q)$ together with $0$. This gives a total of $(q-1)(q+1)+1=q^2$ many quadratic forms that have a repeated root. 
\end{remark}

\begin{lemma}\label{lemma:tangent-to-line}
Let $L$ be an $\bF_q$-line. Then $\mu_d(\mathcal{T}_L) = q^{-1}+q^{-2}-q^{-3}$ for $d\geq 3$. In particular, $\mu(\mathcal{T}_L) = q^{-1}+q^{-2}-q^{-3}$.
\end{lemma}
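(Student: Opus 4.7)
The plan is to reduce $\mathcal{T}_L$ to a condition on the restriction $f|_L$ and then invoke Lemma~\ref{lemma:binary-forms-count}. Since $\PGL_3(\bF_q)$ acts on $R_d$ by invertible linear substitutions and carries $\mathcal{T}_L\cap R_d$ bijectively onto $\mathcal{T}_{L'}\cap R_d$ for any other $\bF_q$-line $L'$, I may assume without loss of generality that $L = \{z = 0\}$.

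Next, consider the restriction map $\rho_d \colon R_d \to \bF_q[x,y]_d$ defined by $\rho_d(f) = f(x,y,0)$. This is $\bF_q$-linear and surjective, with kernel $z\cdot R_{d-1}$, so each fiber has cardinality $q^{\dim R_{d-1}} = q^{d(d+1)/2}$. The key geometric observation is that $f \in \mathcal{T}_L$ if and only if $\rho_d(f) \in B_d$. Indeed, if $L$ is a component of $C = \{f=0\}$, then $\rho_d(f) = 0$, which lies in $B_d$ by convention; otherwise, by B\'ezout's theorem, the intersection multiplicity of $L$ and $C$ at a geometric point $P \in L$ equals the multiplicity of $P$ as a root of the binary form $\rho_d(f)$, so non-transversality of $L$ to $C$ at some $P$ is equivalent to $\rho_d(f)$ having a repeated root in $\bP^1(\overline{\bF}_q)$.

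Combining these facts with Lemma~\ref{lemma:binary-forms-count}, I obtain
$$\#(\mathcal{T}_L \cap R_d) = \#B_d \cdot q^{d(d+1)/2} = (q^d + q^{d-1} - q^{d-2})\cdot q^{d(d+1)/2},$$
and dividing by $\#R_d = q^{(d+1)(d+2)/2}$ yields $\mu_d(\mathcal{T}_L) = q^{-1} + q^{-2} - q^{-3}$ for every $d \geq 3$, which gives the asserted value of the natural density. The proof is quite direct; the only subtlety is the identification between geometric intersection multiplicity and the vanishing order of the binary form $\rho_d(f)$, which simultaneously handles tangency at smooth points, singularities on $L$, and the case where $L$ is a component of $C$.
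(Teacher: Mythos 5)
Your proof is correct and follows essentially the same route as the paper: reduce to $L = \{z=0\}$ by projective symmetry, observe that $f\in\mathcal{T}_L$ precisely when the restriction $f(x,y,0)$ lies in $B_d$, and then count via the surjective linear restriction map and Lemma~\ref{lemma:binary-forms-count}. Your write-up is a bit more explicit than the paper's (naming the kernel $z\cdot R_{d-1}$, spelling out the B\'ezout identification of intersection multiplicity with vanishing order, and separately addressing the case where $L$ is a component of $C$), but the underlying argument is the same.
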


\begin{proof} Without loss of generality, we may assume that $L=\{z=0\}$. Given a plane curve $C=\{F(x,y,z)=0\}$, the condition that $L$ is not transverse to $C$ is equivalent to the assertion that $F(x,y,0)$ is a binary form with a repeated root on $\mathbb{P}^1$. Consider the linear map
\begin{align*}
   \phi_d: \bF_{q}[x,y,z]_{d} &\to \bF_{q}[x,y]_{d} \\
    F &\mapsto F(x,y,0)
\end{align*}
 for each degree $d\geq 1$. Let $m=\dim \bF_q[x,y,z]_{d}-\dim\bF_{q}[x,y]_d = \binom{d+2}{2}-(d+1)$. Since $\phi_d$ is a surjective linear map, every fiber $\phi_d^{-1}(f)$ consists of $q^{m}$ many polynomials in $\bF_q[x,y,z]_{d}$.

Let $B_{d}\subset \bF_{q}[x,y]_{d}$ denote the subset consisting of binary forms of degree $d$ with a repeated root as in Lemma~\ref{lemma:binary-forms-count}. By the observation in the first paragraph,
\begin{align*}
    \mathcal{T}_{L} = \bigsqcup_{d\geq 1}\phi_d^{-1}(B_d).
\end{align*}
Using Lemma~\ref{lemma:binary-forms-count} we deduce that for $d\geq 3$,
\begin{align*}
\mu_{d}(\mathcal{T}_{L}) = \frac{\# \phi_d^{-1}(B_d)}{\# R_d} =\frac{\#B_d\cdot q^{m}}{\# \bF_q[x,y]_{d}\cdot q^m} = \frac{\# B_d}{\# \bF_q[x,y]_{d}}= \frac{q^{d}+q^{d-1}-q^{d-2}}{q^{d+1}}= \frac{1}{q}+\frac{1}{q^2}-\frac{1}{q^3}
\end{align*}
as desired.
\end{proof}

The following general result explains how to obtain the densities of $\mathcal{S}_Q$ and $\mathcal{T}_{L,P}$ and intersections of such sets for \textit{distinct} points. In particular, we get $\mu_d(\mathcal{S}_Q)=q^{-3}$ and $\mu_d(\mathcal{T}_{L, P})=q^{-2\deg(P)}$ for sufficiently large $d$.

\begin{lemma}\label{lemma:multi-line-independence}
Let $P_1,\ldots,P_n$ be a collection of distinct points, and $L_1,\ldots,L_n$ be a collection of $\bF_q$-lines (possibly with repetition) so that $P_i\in L_i$ for each $1\leq i\leq n$.  Furthermore, let $Q_1,\ldots,Q_s$ be a collection of $\bF_q$-points such that no $Q_j$ is on any $L_i$. Then, for $d$ sufficiently large, the events of being tangent to $L_i$ at $P_i$, and being singular at each $Q_j$ are independent. More precisely,
\begin{align*}
\mu_d\left(\bigcap_{i=1}^n \mathcal{T}_{L_i,P_i}\cap \bigcap_{j=1}^{s} \mathcal{S}_{Q_j}\right) = q^{-3s-\sum_{i=1}^n 2\deg(P_i)}
\end{align*}
for all sufficiently large $d$.
\end{lemma}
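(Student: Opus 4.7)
The plan is to realize all of the tangency and singularity conditions simultaneously as the fiber of a single $\bF_q$-linear restriction map, following the approach of Lemma~\ref{lemma:lower-bound-for-single-point-line}. For each $P_i$, set $Z_i = \spec(\mathcal{O}_{\bP^2, P_i}/\mathfrak{m}_{P_i}^2)$, the first-order thickening of $P_i$, which has $\dim_{\bF_q} H^0(Z_i, \mathcal{O}_{Z_i}) = 3\deg(P_i)$. For each $Q_j$, set $W_j = \spec(\mathcal{O}_{\bP^2, Q_j}/\mathfrak{m}_{Q_j}^2)$, so $\dim_{\bF_q} H^0(W_j, \mathcal{O}_{W_j}) = 3$. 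The hypotheses that the $P_i$ are pairwise distinct and that no $Q_j$ lies on any $L_i$ (together with the implicit distinctness of the $Q_j$) guarantee that these thickenings have pairwise disjoint supports, so I may form the disjoint union $Z = \coprod_{i=1}^n Z_i \sqcup \coprod_{j=1}^s W_j \subset \bP^2$ and decompose $H^0(Z, \mathcal{O}_Z) = \bigoplus_i H^0(Z_i, \mathcal{O}_{Z_i}) \oplus \bigoplus_j H^0(W_j, \mathcal{O}_{W_j})$.

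Next, I translate the individual conditions into linear subspaces of $H^0(Z, \mathcal{O}_Z)$. Being singular at $Q_j$ is exactly the vanishing of the $1$-jet of $F$ at $Q_j$, i.e.\ the zero subspace of $H^0(W_j, \mathcal{O}_{W_j})$, of codimension $3$. Being tangent to $L_i$ at $P_i$ is the condition that $F|_{L_i}$ vanishes to order $\geq 2$ at $P_i$, which cuts out a subspace $T_i \subset H^0(Z_i, \mathcal{O}_{Z_i})$ of codimension $2\deg(P_i)$ over $\bF_q$ (two conditions imposed over the residue field $\bF_{q^{\deg(P_i)}}$). Let $T \subset H^0(Z, \mathcal{O}_Z)$ be the direct sum of these conditions; its codimension is $3s + 2\sum_i \deg(P_i)$, and by construction $F \in \bigcap_i \mathcal{T}_{L_i,P_i} \cap \bigcap_j \mathcal{S}_{Q_j}$ iff $\phi_d(F) \in T$, where $\phi_d: R_d \to H^0(Z, \mathcal{O}_Z)$ is the natural restriction map.

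The main (and only real) obstacle is to verify that $\phi_d$ is surjective for all sufficiently large $d$. Since $Z$ is $0$-dimensional, Serre vanishing yields $H^1(\bP^2, \mathcal{I}_Z(d)) = 0$ for $d \gg 0$; combined with the short exact sequence $0 \to \mathcal{I}_Z(d) \to \mathcal{O}_{\bP^2}(d) \to \mathcal{O}_Z \to 0$ (noting that twisting is trivial on the $0$-dimensional sheaf $\mathcal{O}_Z$), this gives surjectivity of $\phi_d$. Alternatively, the Castelnuovo-Mumford regularity of a $0$-dimensional subscheme of $\bP^2$ provides an effective bound on how large $d$ must be.

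Once $\phi_d$ is surjective, every fiber has the same cardinality $\#R_d / \#H^0(Z, \mathcal{O}_Z)$, so for all such $d$,
\[
\mu_d\!\left(\bigcap_{i=1}^n \mathcal{T}_{L_i,P_i}\cap \bigcap_{j=1}^{s} \mathcal{S}_{Q_j}\right) = \frac{\# T}{\# H^0(Z, \mathcal{O}_Z)} = q^{-3s - \sum_{i=1}^n 2\deg(P_i)},
\]
as claimed.
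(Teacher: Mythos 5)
Your proof is correct and takes essentially the same approach as the paper: encode the tangency and singularity conditions as membership in a linear subspace $T$ of $H^0(Z, \mathcal{O}_Z)$ for a finite subscheme $Z$ built from first-order thickenings at the $P_i$ and $Q_j$, and then use surjectivity of the restriction map $\phi_d: R_d \to H^0(Z, \mathcal{O}_Z)$ for large $d$ to count the fiber. The only cosmetic difference is that you justify surjectivity via Serre vanishing (or Castelnuovo--Mumford regularity), whereas the paper cites Poonen's Lemma 2.1, which gives the explicit bound $d \geq \dim H^0(Z, \mathcal{O}_Z) - 1$.
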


\begin{proof} For each $P_i$, consider the subscheme of $\bP^2$ defined by
\begin{align*}
    Y_{i} = \spec\left(\frac{\mathbb{F}_{q}[x,y,z]}{\mathfrak{m}_{P_i}^2}\right),
\end{align*}
where $\mathfrak{m}_{P_i}$ is the maximal ideal consisting of polynomials vanishing at $P_i$. Note that the support of $Y_i$ consists of only the point $P_i$, and the non-reduced structure on $Y_i$ keeps track of the first order tangent information. Similarly, for each $Q_j$, define
\begin{align*}
    Z_{j} = \spec\left(\frac{\mathbb{F}_{q}[x,y,z]}{\mathfrak{m}_{Q_j}^2}\right).
\end{align*}
By taking the disjoint union of these schemes, we define
\begin{align*}
Y = \left(\coprod_{i=1}^{n} Y_i \right) \coprod\left( \coprod_{j=1}^{s} Z_j \right).
\end{align*}
 The natural restriction map,
\begin{align*}
\phi_d: \underbrace{H^{0}(\mathbb{P}^2, \mathcal{O}_{\mathbb{P}^2}(d))}_{=R_{d}} \to H^{0}(Y, \mathcal{O}_{Y}(d)),
\end{align*}
is surjective for $d\geq \dim H^{0}(Y, \mathcal{O}_{Y}(d))-1$ by \cite{Poo04}*{Lemma 2.1}.

Next, there is a codimension $2\deg(P)$ linear space $W_{i}\subset H^{0}(Y_{i}, \mathcal{O}_{d})$ which parametrizes the condition that the tangent direction at $P_i$ is given by the line $L_i$.  Thus, the condition that the given curve $C=\{F=0\}$ is tangent to $L_{i}$ at the point $P_{i}$ for $1\leq i\leq n$ and is singular at $Q_{j}$ for $1\leq j\leq s$ is equivalent to asserting that
\begin{align*}
\phi_d(F)\in W_{1}\times W_{2}\times \cdots \times W_{n}\times \underbrace{\{0\}\times \{0\}\times\cdots \times \{0\}}_{s \text{ copies}} \subset H^{0}(Y, \mathcal{O}_{Y}(d)).
\end{align*}
Since $\prod_{i=1}^{n} W_i\times \prod_{j=1}^{s} \{0\}$ has codimension $3s+\sum_{i=1}^{n} 2\deg(P_i)$ inside the vector space $H^0(Y, \mathcal{O}_Y(d))$, and codimension is preserved under surjective linear maps, it follows that 
\begin{align*}
    \mu_d\left(\bigcap_{i=1}^{n} \mathcal{T}_{L, P_j}\cap \bigcap_{j=1}^{s} \mathcal{S}_{Q_j}\right) &= \frac{\#\phi_{d}^{-1}(\prod_{i=1}^{n} W_i\times \prod_{j=1}^{s} \{0\})}{\# R_d} \\
    &= \prod_{i=1}^{n} q^{-2\deg(P_i)}\cdot \prod_{j=1}^{s} q^{-3} \\
    &= q^{-3s-\sum_{i=1}^{n}2\deg(P_i)}
\end{align*}
for sufficiently large $d$. As an immediately corollary, we also get
\begin{align*}
\mu\left(\bigcap_{i=1}^{n} \mathcal{T}_{L, P_i}\cap \bigcap_{j=1}^{s} \mathcal{S}_{Q_j}\right) 
= \lim_{d\to\infty} \mu_d \left(\bigcap_{i=1}^{n} \mathcal{T}_{L, P_i}\cap \bigcap_{j=1}^{s} \mathcal{S}_{Q_j}\right)  = q^{-3s-\sum_{i=1}^{n}2\deg(P_i)}
\end{align*}
as desired.  \end{proof}

\begin{remark}\label{rmk:independence-may-fail}
If we impose tangency conditions to more than one line at the same point, these conditions are no longer independent. For instance if two lines $L_1,L_2$ meet at a point $Q$, then
\begin{align*}
\mathcal{T}_{L_1,Q}\cap \mathcal{T}_{L_2,Q} = \mathcal{S}_{Q}
\end{align*}
Now, using Lemma~\ref{lemma:multi-line-independence}, we have
\begin{align*}
q^{-3} = \mu(\mathcal{S}_Q) = \mu\big(T_{L_1,Q}\cap T_{L_2,Q}\big) \neq \mu(T_{L_1,Q})\cdot \mu(T_{L_2,Q}) = q^{-4}.
\end{align*}
\end{remark}

While Remark~\ref{rmk:independence-may-fail} shows that the exact independence cannot be guaranteed in general, Lemma~\ref{lemma:many-lines-one-point} below tells us that we can at least bound the density of the intersection by the product of the densities of individual events. 

Let $Q\in\bP^2(\bF_q)$ and let $L_1,\ldots, L_{q+1}$ be all the $q+1$ distinct $\bF_q$-lines containing $Q$. Let $\mathcal{A}_0(Q)$ be the collection of polynomials defining plane curves that are tangent to none of the $L_i$ at $Q$, namely
\begin{align*}
\mathcal{A}_0(Q) \colonequals R_{\homog} \backslash \left(\bigcup_{i=1}^{q+1} \mathcal{T}_{L_i,Q}\right).
\end{align*}
And for $1\leq j\leq q+1$, let $\mathcal{A}_j$ be the collection of curves tangent to $L_j$ at $Q$ but not tangent to any other $L_i$ at $Q$, namely
\begin{align*}
\mathcal{A}_{L_j}(Q) \colonequals \mathcal{T}_{L_j,Q} \backslash \left(\bigcup_{i\neq j} \mathcal{T}_{L_i,Q}\right).
\end{align*}
Recall that a curve is tangent to two different lines at $Q$ if and only if it is singular at $Q$. As a result, $\mathcal{A}_{L_j}(Q)$ is precisely equal to $\mathcal{T}_{L_j,Q}\setminus S_{Q}$.  In general, given an $\bF_q$-line $L$ passing through an $\bF_q$-point $Q$, we define
\begin{align*}
    \mathcal{A}_L(Q) \colonequals \mathcal{T}_{L,Q} \setminus \mathcal{S}_{Q}
\end{align*}

\begin{lemma}\label{lemma:many-lines-one-point}
Let $Q\in\bP^2(\bF_q)$. Consider all the $\bF_q$-lines $L_1, \ldots, L_{q+1}$ containing $Q$. Then, for sufficiently large $d$,
\begin{align}\label{eq:probability-of-A_0}
\mu_{d}(\mathcal{A}_0(Q)) \leq \frac{1}{1-q^{-2}}\prod_{i=1}^{q+1}\mu_{d}\big(R_{\homog}\backslash \mathcal{T}_{L_i,Q}\big),
\end{align}
where the product runs over all $\bF_q$-lines passing through $Q$. Moreover, for each $1\leq j\leq q+1$,
\begin{align}\label{eq:probability-of-A_j}
\mu_{d}(A_{L_j}(Q)) \leq \mu_{d}(\mathcal{T}_{L,Q})\prod_{i\neq j}\mu_{d}\big(R_{\homog}\backslash \mathcal{T}_{L_i,Q}\big).
\end{align}
Consequently, the inequalities \eqref{eq:probability-of-A_0} and \eqref{eq:probability-of-A_j} also hold with $\mu_d$ replaced by $\mu$.
\end{lemma}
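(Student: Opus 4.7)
The plan is to compute both sides of \eqref{eq:probability-of-A_0} and \eqref{eq:probability-of-A_j} exactly by analyzing the first-order jet at $Q$, and then observe that both inequalities reduce to a single Bernoulli-type estimate.

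First, I would set $Y_Q = \spec(\mathbb{F}_q[x,y,z]/\mathfrak{m}_Q^2) \subset \bP^2$, so that $V \colonequals H^0(Y_Q, \mathcal{O}_{Y_Q}(d))$ is a $3$-dimensional $\mathbb{F}_q$-vector space recording the value $f(Q)$ together with the linear part of $f$ at $Q$. By \cite{Poo04}*{Lemma 2.1}, the restriction map $\phi_d : R_d \to V$ is surjective for all sufficiently large $d$, so densities of subsets of $R_d$ defined by conditions on $\phi_d(f)$ are computed by the corresponding fractions in $V$. Each $\mathcal{T}_{L_i,Q}$ pulls back from a one-dimensional subspace $W_i \subset V$, cut out by the vanishing $f(Q)=0$ together with the requirement that the linear part of $f$ at $Q$ define the line $L_i$, while $\mathcal{S}_Q$ pulls back from $\{0\} \subset V$. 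The $q+1$ subspaces $W_1,\ldots,W_{q+1}$ are distinct, pairwise meet only at $0$, and since $(q+1)(q-1)+1 = q^2$, their union is exactly the hyperplane $\{f(Q)=0\}\subset V$.

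From this picture the densities are a matter of counting. The set $\mathcal{A}_0(Q)$ corresponds to the complement of the hyperplane $\{f(Q)=0\}$, so $\mu_d(\mathcal{A}_0(Q)) = (q^3-q^2)/q^3 = 1 - q^{-1}$ for large $d$. The set $\mathcal{A}_{L_j}(Q) = \mathcal{T}_{L_j,Q}\setminus \mathcal{S}_Q$ corresponds to $W_j\setminus\{0\}$, so $\mu_d(\mathcal{A}_{L_j}(Q)) = (q-1)/q^3 = q^{-2}(1 - q^{-1})$. Combined with the identities $\mu_d(\mathcal{T}_{L_i,Q}) = q^{-2}$ and $\mu_d(R_{\homog}\setminus \mathcal{T}_{L_i,Q}) = 1 - q^{-2}$ (valid for large $d$ by Lemma~\ref{lemma:multi-line-independence}), both inequalities \eqref{eq:probability-of-A_0} and \eqref{eq:probability-of-A_j} reduce to the single claim
\[
1 - q^{-1} \leq (1 - q^{-2})^q,
\]
which is immediate from Bernoulli's inequality $(1-x)^n \geq 1-nx$ applied with $x = q^{-2}$ and $n = q$. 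The passage from $\mu_d$ to $\mu$ is automatic because all quantities above are independent of $d$ for $d$ large.

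The proof is essentially bookkeeping rather than strategy. The only conceptually interesting observation, and what prevents the naive independence bound $(1-q^{-2})^{q+1}$ from being tight for $\mathcal{A}_0(Q)$, is that the $q+1$ lines $W_i$ form a pencil exhausting the hyperplane $\{f(Q)=0\}$; this explains the extra factor $1/(1-q^{-2})$ in \eqref{eq:probability-of-A_0}. I do not anticipate any genuine obstacle.
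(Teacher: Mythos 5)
Your proof is correct, and while it computes the same exact densities as the paper ($\mu_d(\mathcal{A}_0(Q)) = 1-q^{-1}$ and $\mu_d(\mathcal{A}_{L_j}(Q)) = q^{-2}-q^{-3}$, both via the first-order jet space $V$ at $Q$), it diverges from the paper's argument in one genuinely useful way: the key inequality. The paper reduces both bounds to the claim $\psi(q) := (1-q^{-2})^q/(1-q^{-1}) \geq 1$ and proves it by computing $\psi'(q)$, showing $\psi$ is decreasing, and evaluating $\lim_{q\to\infty}\psi(q)=1$. You observe that the same claim is exactly Bernoulli's inequality $(1-q^{-2})^q \geq 1 - q\cdot q^{-2}$, which dispenses with the calculus entirely. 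You also make explicit a clean structural fact the paper leaves implicit in its partition computation, namely that the $q+1$ lines $W_i$ form a pencil exhausting the hyperplane $\{f(Q)=0\}\subset V$, which is the geometric source of the non-independence and of the factor $(1-q^{-2})^{-1}$. Both approaches are sound and arrive at identical intermediate quantities; yours is shorter and more elementary at the final step, while the paper's calculus argument does have the side benefit of showing $\psi(q)\to 1$, i.e., that the correction factor is asymptotically sharp, which Bernoulli alone does not exhibit.
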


\begin{proof} Let us begin with the set $\mathcal{A}_{L_j}$ for $1\leq j\leq q+1$. From the point of view of the point $Q$, the lines $L_{L_1},\ldots,L_{L_{q+1}}$ are indistinguishable, so $\mu(\mathcal{A}_{L_1})=\cdots=\mu(\mathcal{A}_{L_{q+1}})$. Hence we may focus on $\mathcal{A}_{L_1}$.

Since $\mathcal{A}_{L_1} = \mathcal{T}_{L_1,Q}\backslash \mathcal{S}_Q$ and $\mathcal{S}_Q\subset \mathcal{T}_{L_1,Q}$, we have that for sufficiently large $d$,
\begin{align}\label{eq:density-of-Aj}
\mu_d(\mathcal{A}_{L_1}(Q)) = \mu_d(\mathcal{T}_{L_1,Q}) - \mu_d(\mathcal{S}_Q) = q^{-2} - q^{-3},
\end{align}
using Lemma~\ref{lemma:multi-line-independence}.

Meanwhile, we have
\begin{align*}
\mu_d(\mathcal{T}_{L_1,Q})\prod_{i=2}^{q+1}\mu_d\big(R_{\homog}\backslash \mathcal{T}_{L_i,Q}\big) = q^{-2}(1-q^{-2})^{q}.
\end{align*}
for all sufficiently large $d$. Let us investigate the ratio,
\begin{align}\label{eq:ratio-for-A1}
\frac{\mu_{d}(\mathcal{T}_{L_1,Q})\displaystyle\prod_{i=2}^{q+1}\mu_{d}\big(R_{\homog}\backslash \mathcal{T}_{L_i,Q}\big)}{\mu_{d}(\mathcal{A}_1)} = \frac{q^{-2}(1-q^{-2})^{q}}{q^{-2}-q^{-3}} = \frac{(1-q^{-2})^{q}}{1-q^{-1}}.
\end{align}
We claim that
\begin{align}\label{eq:size-of-psi}
    \psi(q) \colonequals \frac{(1-q^{-2})^q}{1-q^{-1}}\geq 1
\end{align}
for each $q\geq 2$. We compute the derivative of $\psi$ with respect to $q$:
\begin{align*}
\psi'(q)  = \frac{(1-q^{-2})^q}{q^2-1}\Big(1+q(q+1)\log(1-q^{-2})\Big).
\end{align*}
The fraction $\frac{(1-q^{-2})^q}{q^2-1}$ is positive, while
\begin{align*}
1+q(q+1)\log(1-q^{-2}) \leq 1-q(q+1)\cdot q^{-2} = -q^{-1} < 0.
\end{align*}
Thus, the fraction $\psi(q)$ is decreasing with $q$. Moreover, it is straightforward to check that
\begin{align*}
\lim_{q\to\infty}\psi(q) = \lim_{q\to\infty}\frac{(1-q^{-2})^q}{1-q^{-1}} = 1.
\end{align*}
This completes the proof of \eqref{eq:size-of-psi}. Combining \eqref{eq:ratio-for-A1} and \eqref{eq:size-of-psi}, we deduce that
\begin{align*}
\frac{\displaystyle \mu_d(T_{L_1,Q})\prod_{i=2}^{q+1}\mu_d\big(R_{\homog}\backslash \mathcal{T}_{L_i,Q}\big)}{\mu_d(\mathcal{A}_1(Q))} = \frac{(1-q^{-2})^{q}}{1-q^{-1}} \geq 1.
\end{align*}
which rearranges into \eqref{eq:probability-of-A_j}. To analyze the density of $\mathcal{A}_0(Q)$, we first consider the complementary set $R_{\homog}\backslash \mathcal{A}_0(Q)$. A plane curve defined by an element of $R_{\homog}\backslash \mathcal{A}_0(Q)$ is either singular at $Q$, or else tangent to exactly one of the lines $L_1,\ldots,L_{q+1}$. In other words, $R_{\homog}\backslash\mathcal{A}_0(Q)$ can be partitioned into
\begin{align*}
    R_{\homog}\backslash\mathcal{A}_0(Q) = \mathcal{S}_{Q} \bigsqcup \left(\bigsqcup_{i=1}^{q+1} \mathcal{A}_{L_i}(Q)\right).
\end{align*}
Using Lemma~\ref{lemma:multi-line-independence} and \eqref{eq:density-of-Aj}, we deduce that
\begin{align*}
\mu_d(\mathcal{A}_0(Q)) = 1 - q^{-3} - (q+1)\mu_d(\mathcal{A}_{L_1}(Q)) = 1 - q^{-3} - (q+1)(q^{-2}-q^{-3}).
\end{align*}
for sufficiently large $d$. On the other hand,
\begin{align*}
\prod_{i=1}^{q+1} \mu_d\Big(R_{\homog}\backslash \mathcal{T}_{L_i,Q}\Big) = (1-q^{-2})^{q+1}.
\end{align*}
for sufficiently large $d$. We examine the ratio
\begin{align*}
\frac{\displaystyle\prod_{i=1}^{q+1}\mu_d\big(R_{\homog}\backslash \mathcal{T}_{L_i,Q}\big)}{\mu_d(\mathcal{A}_0(Q))} = \frac{(1-q^{-2})^{q+1}}{1-q^{-3} + (q+1)(q^{-3}-q^{-2})}
 & = \frac{(1-q^{-2})^{q+1}}{1-q^{-1}}.
\end{align*}
Using \eqref{eq:size-of-psi}, this quantity can be bounded below as follows.
\begin{align*}
\frac{(1-q^{-2})^{q+1}}{1-q^{-1}} = (1-q^{-2})\frac{(1-q^{-2})^{q}}{1-q^{-1}}\geq 1-q^{-2}.
\end{align*}
Hence for all $q\geq 2$, we have
\begin{align*}
\frac{\displaystyle\prod_{i=1}^{q+1}\mu_d\big(R_{\homog}\backslash \mathcal{T}_{L_i,Q}\big)}{\mu_d(\mathcal{A}_0(Q))} = \frac{(1-q^{-2})^{q+1}}{1-q^{-3} + (q+1)(q^{-3}-q^{-2})} \geq 1-q^{-2}.
\end{align*}
which justifies~\eqref{eq:probability-of-A_0}.
\end{proof}

Our final proposition in this section is an independence result that essentially states that the events $\mathcal{T}_{L, P}, \mathcal{S}_{Q}, \mathcal{A}_0(Q), \mathcal{A}_{L}(Q)$, and their complements are independent in sufficiently large degree.

\begin{lemma}\label{lemma:super-independence}
Consider any finite collection of sets $\mathcal{D}_j$, where each $\mathcal{D}_j$ is one of $\mathcal{T}_{L_j,P_j}$, $\mathcal{S}_{Q_j}$, $\mathcal{A}_0(Q_j)$, $\mathcal{A}_{L_j}(Q_j)$, or the complement of one of the above within $R_{\homog}$. Here, $P_i$ are arbitrary points of $\bP^2$, and $Q_j$ are arbitrary $\bF_q$-points of $\bP^2$. If the $P_i$'s and $Q_j$'s are all distinct, then for sufficiently large $d$,
\begin{align*}
    \mu_d\left(\bigcap \mathcal{D}_j\right) = \prod_j\mu_d(\mathcal{D}_j).
\end{align*}
\end{lemma}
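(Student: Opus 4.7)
The plan is to reduce to Poonen's Bertini theorem with Taylor conditions via the fat-point construction already used in the proof of Lemma~\ref{lemma:multi-line-independence}, and then exploit the direct-sum decomposition of global sections over distinct closed points to factor the density of the intersection.

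Enumerate the points appearing in the lemma as $R_1,\ldots,R_m$, where each $R_j$ is either a $P_j$ or a $Q_j$; by hypothesis these are pairwise distinct closed points of $\bP^2$. To each such point I associate the fat point $Y_j = \spec\!\left(\bF_q[x,y,z]/\mathfrak{m}_{R_j}^2\right)$, whose non-reduced structure records the value and the first-order derivatives of a polynomial at $R_j$. Setting $Y = \coprod_j Y_j$ as a subscheme of $\bP^2$, the pairwise-distinctness hypothesis gives the direct sum decomposition
\[
H^0(Y, \mathcal{O}_Y(d)) = \bigoplus_j H^0(Y_j, \mathcal{O}_{Y_j}(d)).
\]
By \cite{Poo04}*{Lemma 2.1}, for all $d$ sufficiently large the restriction map $\phi_d : R_d \to H^0(Y, \mathcal{O}_Y(d))$ is surjective, and in particular all of its fibers have the common cardinality $\#R_d / \#H^0(Y, \mathcal{O}_Y(d))$.

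The second step is to verify that each of the sets $\mathcal{T}_{L,P}$, $\mathcal{S}_Q$, $\mathcal{A}_0(Q)$, $\mathcal{A}_L(Q)$, and their complements, is determined by the 2-jet of $f$ at a single point. Tangency of $\{f=0\}$ to an $\bF_q$-line $L$ at a point $P$ depends only on the value and first partial derivatives of $f$ at $P$; singularity at an $\bF_q$-point $Q$ is the simultaneous vanishing of $f$ and its first partials at $Q$; and the sets $\mathcal{A}_0(Q)$ and $\mathcal{A}_L(Q)$ are by definition Boolean combinations of conditions of the above form at the single point $Q$. Consequently, for each $j$ there is a (not necessarily linear) subset $W_j \subset H^0(Y_j, \mathcal{O}_{Y_j}(d))$ such that $f \in \mathcal{D}_j$ if and only if the $j$-th component of $\phi_d(f)$ lies in $W_j$. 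Applying the same surjectivity argument with the single scheme $Y_j$ in place of $Y$ yields $\mu_d(\mathcal{D}_j) = \#W_j/\#H^0(Y_j, \mathcal{O}_{Y_j}(d))$ for all $d$ sufficiently large.

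Combining these two observations finishes the proof: using the direct-sum decomposition, $f \in \bigcap_j \mathcal{D}_j$ is equivalent to $\phi_d(f) \in \prod_j W_j$, and the equal-size-fibers property then yields
\[
\mu_d\!\left(\bigcap_j \mathcal{D}_j\right) = \frac{\prod_j \#W_j}{\prod_j \#H^0(Y_j, \mathcal{O}_{Y_j}(d))} = \prod_j \mu_d(\mathcal{D}_j).
\]
The argument is essentially bookkeeping on top of the standard fat-point machinery. The one point requiring care is that $\mathcal{A}_0(Q_j)$ and $\mathcal{A}_{L_j}(Q_j)$ are not linear subspaces of the corresponding direct summand but only set-theoretic subsets of it; nevertheless, because each is cut out by conditions involving only the single point $Q_j$, the decoupling across the direct summands goes through verbatim, which is precisely why the hypothesis that the points are all distinct is essential (compare Remark~\ref{rmk:independence-may-fail}).
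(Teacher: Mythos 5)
Your proof is correct, but it takes a genuinely different route from the paper's. The paper establishes the result inductively: it starts from Lemma~\ref{lemma:multi-line-independence} (which handles intersections of $\mathcal{T}_{L,P}$ and $\mathcal{S}_Q$ sets, where the relevant $W$'s are \emph{linear} subspaces of the fat-point summands), then uses an inclusion--exclusion step to show that replacing any $\mathcal{D}_1$ by its complement preserves multiplicativity, then handles $\mathcal{A}_{L}(Q)$ by expanding it as the set difference $\mathcal{T}_{L,Q}\backslash\mathcal{S}_Q$, and finally handles $\mathcal{A}_0(Q)$ by decomposing $R_{\homog}\backslash\mathcal{A}_0(Q)$ as a disjoint union of $\mathcal{S}_Q$ and the various $\mathcal{A}_{L}(Q)$. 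By contrast, you bypass the reduction entirely: you observe that every one of the allowed sets is cut out by conditions on the image of $f$ in $H^0(Y_j, \mathcal{O}_{Y_j}(d))$ for a single closed point $R_j$, and hence corresponds to a (set-theoretic, not necessarily linear) subset $W_j$ of the $j$-th summand. Surjectivity of the restriction map $\phi_d$ onto the direct sum, with its consequent equal-size-fibers property, then factors the density in a single step. In effect you are running the argument of Lemma~\ref{lemma:multi-line-independence} once, with arbitrary local conditions in place of linear ones, rather than proving the linear case and bootstrapping.

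Your version is shorter and arguably more transparent: it exposes the real mechanism (distinct closed points give independent $2$-jets once $d$ is large). The paper's version has the benefit of resting visibly on the already-proved Lemma~\ref{lemma:multi-line-independence} and the identities $\mathcal{A}_L(Q)=\mathcal{T}_{L,Q}\setminus\mathcal{S}_Q$ and $R_{\homog}\setminus\mathcal{A}_0(Q)=\mathcal{S}_Q\sqcup\bigsqcup_{Q\in L}\mathcal{A}_L(Q)$, which it reuses in the upper-bound computation. Both arguments draw on the same input from Poonen, namely surjectivity of the restriction to the fat-point scheme in large degree, and both require the distinctness of the points for exactly the reason you flag at the end (compare Remark~\ref{rmk:independence-may-fail}).
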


\begin{proof}
First, note that if each $\mathcal{D}_j$ is either of the form $\mathcal{T}_{L_j,P_j}$ or $\mathcal{S}_{P_j}$ then the result was established in Lemma~\ref{lemma:multi-line-independence}. We first show that if
\begin{align*}
\mu_d\left(\bigcap_{j=1}^m \mathcal{D}_j\right) = \prod_{j=1}^m\mu_d(\mathcal{D}_j),
\end{align*}
holds for all subsets of $\{\mathcal{D}_1, \mathcal{D}_2, \mathcal{D}_{3}, \ldots \}$, then
\begin{align*}
\mu_d\left(\Big(R_{\homog}\backslash \mathcal{D}_1\Big)\cap\left(\bigcap_{j=2}^m \mathcal{D}_j\right)\right) = \mu_d(R_{\homog}\backslash \mathcal{D}_1)\prod_{j=2}^m \mu_d(\mathcal{D}_j).
\end{align*}
Since each $\mu_d$ is a probability measure on $R_d$, we have $\mu_d(R_{\homog}\backslash \mathcal{D}_1) = 1-\mu_d(\mathcal{D}_1)$. Writing $\bigcap_{j=2}^m \mathcal{D}_j = \left(\bigcap_{j=1}^m \mathcal{D}_j\right)\cup\left(\Big(R_{\homog}\backslash \mathcal{D}_1\Big)\cap\left(\bigcup_{j=2}^m  \mathcal{D}_j\right)\right)$ as a disjoint union, we have
\begin{align*}
\mu_d\left(\Big(R_{\homog}\backslash \mathcal{D}_1\Big)\cap\left(\bigcup_{j=2}^m \mathcal{D}_j\right)\right) &=  \mu\left(\bigcap_{j=2}^m \mathcal{D}_j\right) - \mu_d\left(\bigcap_{j=1}^m \mathcal{D}_j\right)\\
 &= \Big(1-\mu_d(\mathcal{D}_1)\Big)\mu_d\left(\bigcap_{j=2}^m \mathcal{D}_j\right)\\
 &= \mu_d(R_{\homog}\backslash \mathcal{D}_1)\prod_{j=2}^m\mu_d(\mathcal{D}_j).
\end{align*}
Inductively one may extend this procedure to any number of complements among the $\mathcal{D}_j$.

To deal with sets of the form $\mathcal{A}_{L_j}(Q_j)$, we will write $\mathcal{A}_{L_j}(Q_j) = \mathcal{T}_{L_j,Q_j}\backslash S_{Q_j}$. For example, if $\mathcal{D}_1 = \mathcal{A}_{L_1}(Q_1)$, and the remainder of the $\mathcal{D}_j$'s are of the form $\mathcal{D}_j=\mathcal{T}_{L_j,P_j}$ or $\mathcal{D}_j = \mathcal{S}_{Q_j}$, then write
\begin{align*}
\mu_d\left(\bigcap_{j=1}^m \mathcal{D}_j\right) &= \mu_d\left(\mathcal{T}_{L_1,Q_1}\cap\left(\bigcap_{j=2}^m \mathcal{D}_j\right)\right) - \mu_d\left(\mathcal{S}_{Q_1}\cap\left(\bigcap_{j=2}^m \mathcal{D}_j\right)\right)\\
 &= \mu_d(\mathcal{T}_{L_1,Q_1})\prod_{j=2}^m\mu_d(\mathcal{D}_j) - \mu_d(\mathcal{S}_{Q_1})\prod_{j=2}^m\mu_d(\mathcal{D}_j)\\
 &= \mu_d(\mathcal{A}_{L_1}(Q_1))\prod_{j=2}^m\mu_d(\mathcal{D}_j) = \prod_{j=1}^m\mu_d(\mathcal{D}_j).
\end{align*}
One can induct on the number of times sets of the form $\mathcal{A}_{L_j}(Q_j)$ appears on the list to allow any number of instances.

Finally, we explain how to deal with the case when $\mathcal{D}_j$ is of the form $\mathcal{A}_0(Q_j)$. As in the proof of Lemma~\ref{lemma:many-lines-one-point}, we can express
\begin{align*}
\mathcal{A}_0(Q_j) = R_{\homog}\backslash \left(\mathcal{S}_{Q_j}\cup\left(\bigsqcup_{Q_j\in L}\mathcal{A}_{L}(Q_j)\right)\right),
\end{align*}
where the union inside the parenthesis runs over all $\bF_q$-lines $L$ such that $Q_j\in L$. By the observation made in the beginning of the proof, it suffices to deal with the case when $\mathcal{D}_j$ is a complement of $\mathcal{A}_0(Q_j)$ for some $\bF_q$-point $Q_j$.  For example, assume that $\mathcal{D}_1 = R_{\homog}\backslash \mathcal{A}_0(Q_1)$, and the remainder of the $\mathcal{D}_j$ are of the other forms considered earlier. Since $\mathcal{D}_1$ can be written as a disjoint union, 
\begin{align*}
\mu_d\left(\bigcap_{j=1}^m \mathcal{D}_j\right) &= \mu_d\left(\left(\mathcal{S}_{Q_1}\cap \bigcap_{j=2}^m \mathcal{D}_j \right)\cup \bigsqcup_{Q_1\in L} \left(\mathcal{A}_{L}(Q_1)\cap \bigcap_{j=2}^m \mathcal{D}_j \right) \right) \\
&=\mu_d\left(\mathcal{S}_{Q_1}\cap \bigcap_{j=2}^m \mathcal{D}_j \right) +\sum_{Q_1\in L} \mu_d\left(\mathcal{A}_{L}(Q_1)\cap \bigcap_{j=2}^m \mathcal{D}_j\right) \\
&=\left(\mu_d(\mathcal{S}_{Q_1}) + \sum_{Q_1\in L}\mu_d(\mathcal{A}_{L}(Q_1))\right) \prod_{j=2}^m \mu_d(\mathcal{D}_j)\\
 &= \mu_d(\mathcal{D}_1)\cdot \prod_{j=2}^m \mu_d(\mathcal{D}_j) =\prod_{j=1}^m\mu_d(\mathcal{D}_j).
\end{align*}
As before, inducting on the number of times a set of the form $\mathcal{A}_0(Q_j)$ appears in the list allows for any number of instances.
\end{proof}

\section{Proof of the upper bound}\label{section:upper-bound}

In this section, our goal is to prove the following result.

\begin{theorem}\label{thm:precise-upper-bound} Let $\mathcal{F}\subset R=\bF_q[x,y,z]$ denote the subset defining smooth transverse-free plane curves. Then
\begin{align*}
    \overline{\mu}(\mathcal{F})\leq (1-q^{-2})^{-(q^2+q+1)} \cdot \left(\frac{1}{q}+\frac{1}{q^2}-\frac{1}{q^3}\right)^{q^2+q+1}.
\end{align*}
\end{theorem}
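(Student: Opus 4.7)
The plan is to use the inclusion $\mathcal{F}\subseteq\bigcap_L \mathcal{T}_L$ (since any smooth transverse-free curve is tangent to every $\bF_q$-line) and bound $\overline{\mu}(\bigcap_L \mathcal{T}_L)$ by the claimed quantity. First I would reduce to tangencies at closed points of bounded degree: writing $\mathcal{T}_L^{\leq N}=\bigcup_{P\in L,\,\deg P\leq N}\mathcal{T}_{L,P}$ and $\mathcal{T}_L^{>N}=\bigcup_{P\in L,\,\deg P>N}\mathcal{T}_{L,P}$, the union bound together with Lemma~\ref{lemma:multi-line-independence} gives $\overline{\mu}(\mathcal{T}_L^{>N})\leq\sum_{d>N}N_d(L)\,q^{-2d}$, where $N_d(L)=O(q^d)$ is the number of degree-$d$ closed points of $L\simeq\bP^1$; this tail tends to $0$ as $N\to\infty$. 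Since $\bigcap_L\mathcal{T}_L\subseteq(\bigcap_L\mathcal{T}_L^{\leq N})\cup\bigcup_L\mathcal{T}_L^{>N}$, it suffices to prove the claimed bound for $\overline{\mu}(\bigcap_L\mathcal{T}_L^{\leq N})$ uniformly in $N$.

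For each $N$, I would expand $\bigcap_L\mathcal{T}_L^{\leq N}=\bigcup_{(P_L)}\bigcap_L\mathcal{T}_{L,P_L}$, the union running over tuples $(P_L)$ of closed points with $P_L\in L$ and $\deg P_L\leq N$, and apply a union bound. Group the tuples by the equivalence relation $L\sim L'\iff P_L=P_{L'}$. Because two distinct $\bF_q$-lines meet in a single $\bF_q$-point, any non-singleton equivalence class $E$ must consist of concurrent $\bF_q$-lines through a shared $\bF_q$-point $Q_E$, and Remark~\ref{rmk:independence-may-fail} collapses $\bigcap_{L\in E}\mathcal{T}_{L,Q_E}$ to $\mathcal{S}_{Q_E}$, contributing density $q^{-3}$. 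Across distinct equivalence classes the assigned points are pairwise distinct, so Lemma~\ref{lemma:super-independence} (extended to higher-degree closed points, just as in the proof of Lemma~\ref{lemma:multi-line-independence}) factorizes the density of each tuple as a product of $q^{-2\deg P_L}$ per singleton class and $q^{-3}$ per non-singleton class.

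The final step is to reorganize the resulting sum into the product $\prod_L\mu(\mathcal{T}_L)\cdot\prod_Q(1-q^{-2})^{-1}$. Summing singleton contributions along a single line $L$ over $P_L\in L$ of bounded degree converges, as $N\to\infty$, to $1-\prod_{P\in L}(1-q^{-2\deg P})=\mu(\mathcal{T}_L)=q^{-1}+q^{-2}-q^{-3}$ by the zeta-type identity underlying Lemma~\ref{lemma:tangent-to-line}. The non-singleton contributions are indexed by $\bF_q$-points (since each concurrency class is determined by its shared $\bF_q$-point together with a subset of the $q+1$ lines through it) and can be absorbed into a single correction factor $(1-q^{-2})^{-1}$ per $\bF_q$-point, as supplied by the inequality $\mu(\mathcal{A}_0(Q))\leq(1-q^{-2})^{-1}\prod_{L\ni Q}\mu(R_\homog\setminus\mathcal{T}_{L,Q})$ in Lemma~\ref{lemma:many-lines-one-point}. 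The main obstacle is this final combinatorial bookkeeping: one must verify that the sum over concurrency patterns telescopes exactly into one $\mu(\mathcal{T}_L)$ factor per $\bF_q$-line and one $(1-q^{-2})^{-1}$ factor per $\bF_q$-point, with no residual. The $(1-q^{-2})^{-1}$ correction in Lemma~\ref{lemma:many-lines-one-point} is tailored precisely for this absorption, so carrying this step out cleanly is the crux of the argument.
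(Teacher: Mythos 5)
Your high-level outline (reduce to bounded degree via Poonen's medium-degree lemma, partition according to tangency locations, exploit independence, absorb the concurrency overlaps into a $(1-q^{-2})^{-1}$ factor per $\bF_q$-point) matches the paper's strategy, and your reduction step and treatment of concurrency classes via Remark~\ref{rmk:independence-may-fail} are on the right track. However, there is a genuine gap precisely at the step you flag as the crux, and it is not just bookkeeping: it is an error in the telescoping identity. You expand $\bigcap_L\mathcal{T}_L^{\leq N}$ as a union over tuples $(P_L)$ of \emph{single} tangency points, apply the union bound, and then assert that summing the per-line factors $q^{-2\deg P_L}$ over $P_L\in L$ yields $1-\prod_{P\in L}(1-q^{-2\deg P})=\mu(\mathcal{T}_L)$. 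That identity is false: $\sum_{P\in L}q^{-2\deg P}$ is strictly larger than $1-\prod_{P\in L}(1-q^{-2\deg P})$ (for $q=2$ the sum is about $0.865$ while the product form gives $5/6\approx 0.833$). The two quantities agree only to first order; the naive sum double-counts curves tangent to $L$ at several points, and this excess already exceeds the $(1-q^{-2})^{-1}$ slack available, so the argument as described cannot close.

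The missing idea is to index the decomposition not by a choice of one tangency point per line but by the \emph{full set} $E_i\subseteq L_i^r$ of tangency points on $L_i$, and to use sets $\mathcal{A}_{E_i}$ that demand tangency at every $P\in E_i$ \emph{and non-tangency at every $P\in L_i^r\setminus E_i$}. The non-tangency factors $(1-q^{-2\deg P})$ are exactly what make the sum telescope:
\begin{align*}
\sum_{\emptyset\neq E_i\subseteq L_i^r}\ \prod_{P\in E_i}q^{-2\deg P}\ \prod_{P\in L_i^r\setminus E_i}\bigl(1-q^{-2\deg P}\bigr)\ =\ 1-\prod_{P\in L_i^r}\bigl(1-q^{-2\deg P}\bigr)\ \leq\ \mu(\mathcal{T}_{L_i}),
\end{align*}
which is the content of Lemma~\ref{lemma:tangent-to-line} combined with the binomial expansion of $\prod_{P}\bigl[q^{-2\deg P}+(1-q^{-2\deg P})\bigr]=1$. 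Moreover, this finer decomposition is essentially disjoint (two different assignments $(E_i)$ give disjoint sets once the smoothness constraint $E_i\cap E_j=\emptyset$ is imposed), so there is no wasteful overcounting. The $(1-q^{-2})^{-1}$ correction from Lemma~\ref{lemma:many-lines-one-point} then handles the $\bF_q$-points only, exactly as you anticipated. To repair your proof you would need to replace the tuple-of-single-points decomposition with this set-valued one; without the non-tangency factors the sum does not contract to $\mu(\mathcal{T}_L)^{q^2+q+1}$.
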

Let us explain how the upper bound in Theorem~\ref{thm:precise-upper-bound} is stronger than the upper bound given in Theorem~\ref{thm:smooth-proportion-transverse-free}. It suffices to show that
\begin{align}\label{eq:inequality-7.5}
    (1-q^{-2})^{-(q^2+q+1)}\leq 7.5 
\end{align}
for all $q\geq 2$. The function $\xi(q)\colonequals (1-q^{-2})^{-(q^2+q+1)}$ is decreasing for $q\geq 2$, and in fact, $\displaystyle\lim_{q\to\infty} \xi(q)=e$. In particular, we have $\xi(q)\leq \xi(2)$. Consequently,
\begin{align*}
    \xi(q) =  (1-q^{-2})^{-(q^2+q+1)} \leq 
    \xi(2) = \left(\frac{4}{3}\right)^7 \approx 7.4915409 < 7.5
\end{align*}
justifying \eqref{eq:inequality-7.5}.

\subsection{Reduction to the bounded degree case.}\label{subsect:reduction} Our ultimate goal is to understand the upper density of $\mathcal{F}$. By definition, $f\in \mathcal{F}$ defines a smooth plane curve $C$ such that all $\bF_q$-lines are tangent to $C$. If $C=\{f=0\}$ and $L$ is an $\bF_q$-line tangent to $C$, let $P$ be a point of tangency. The only control on the degree of $P$ is $\deg(P)\leq d/2$ by B\'ezout's theorem. In this subsection, we will explain a reduction argument that allows us to consider the case where all the points of tangencies satisfy $\deg(P)\leq r$ for some fixed integer $r>0$.

Let $L_1, L_2, \ldots, L_{q^2+q+1}$ be the $\bF_q$-lines in $\bP^2$. For each $1\leq i\leq q^2+q+1$, consider the set
\begin{align*}
\mathcal{M}^{i}_{t} = \{f\in R_{d} \ | \ & C=\{f=0\} \text{ is tangent to the line } L_i \\ &\text{at a point } P\in L_i \text{ with } t\leq \deg P\leq d/2 
\}.
\end{align*}
By Poonen's result \cite{Poo04}*{Lemma 2.4} on the medium degree singularities, we have
\begin{align}\label{eq:medium-degree-zero-measure}
    \lim_{t\to \infty} \overline{\mu}(\mathcal{M}^{i}_{t}) = 0.
\end{align}
Let $\varepsilon>0$. By \eqref{eq:medium-degree-zero-measure}, there exists a large enough integer $r=r(\varepsilon)>0$ such that
\begin{align*}
     \overline{\mu}(\mathcal{M}^{i}_{t}) < \frac{\varepsilon}{q^2+q+1}
\end{align*}
for all $t>r$. Note that the value of $r$ does not actually depend on $i$, because of the symmetry among the $\bF_q$-lines. As a result,
\begin{align}\label{eq:medium-degree-epsilon}
\overline{\mu}\left(\bigcup_{i=1}^{q^2+q+1}\mathcal{M}^{i}_{t}\right) < \varepsilon
\end{align} 
for all $t>r$. We can summarize \eqref{eq:medium-degree-epsilon} as stating that there exists a positive integer $r$ (depending on $\varepsilon$) so that the collection of curves with any tangency along an $\bF_q$-line at a point of degree $t>r$ has natural density less than $\varepsilon$. Now, let us partition the set $\mathcal{F}$ in Theorem~\ref{thm:smooth-proportion-transverse-free} as
\begin{align}\label{eq:decomposition-of-F}
    \mathcal{F} = \mathcal{F}_{\leq r} \cup \left(\mathcal{F} \setminus \mathcal{F}_{\leq r}\right)
\end{align}
where
\begin{align*}
\mathcal{F}_{\leq r}\colonequals \{f\in \mathcal{F} \ | \ & C=\{f=0\} \text{ is a smooth plane curve such} \\ 
&\text{that all its tangency points along $\bF_q$-lines}  \\ 
&\text{have degree}  \leq r \}.
\end{align*}
Combining \eqref{eq:medium-degree-epsilon} with the inclusion
\begin{align*}
    \mathcal{F} \setminus \mathcal{F}_{\leq r} \subset \bigcup_{i=1}^{q^2+q+1}\mathcal{M}^{i}_{r+1} 
\end{align*}
we obtain $\overline{\mu}(\mathcal{F} \setminus \mathcal{F}_{\leq r})<\varepsilon$. Consequently, the partition in~\eqref{eq:decomposition-of-F} yields
\begin{align}\label{eq:density-of-F-and-Fr}
    \overline{\mu}(\mathcal{F}) < \overline{\mu}(\mathcal{F}_{\leq r}) + \varepsilon.
\end{align}
To prove Theorem~\ref{thm:precise-upper-bound}, it suffices to show the following inequality.
\begin{align}\label{eq:upper-density-of-Fr}
     \overline{\mu}(\mathcal{F}_{\leq r})\leq (1-q^{-2})^{-(q^2+q+1)} \cdot \left(q^{-1}+q^{-2}-q^{-3}\right)^{q^2+q+1}
\end{align}
Indeed, combining \eqref{eq:density-of-F-and-Fr} and \eqref{eq:upper-density-of-Fr} and letting $\varepsilon\to 0$, we obtain the conclusion of Theorem~\ref{thm:precise-upper-bound}. Thus, our goal has been reduced to proving the following theorem.

\begin{theorem}\label{thm:precise-upper-bound-degree-r} Let $\mathcal{F}_{\leq r}\subset R=\bF_q[x,y,z]$ denote the subset defining smooth transverse-free plane curves, where the tangency point(s) along each $\bF_q$-line have degree at most $r$. Then
\begin{align*}
    \overline{\mu}(\mathcal{F}_{\leq r})\leq (1-q^{-2})^{-(q^2+q+1)} \cdot\left(q^{-1}+q^{-2}-q^{-3}\right)^{q^2+q+1}.
\end{align*}
\end{theorem}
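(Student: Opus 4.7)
The plan is to partition $\mathcal{F}_{\leq r}$ by the tangency pattern at the $\bF_q$-points of $\bP^2$ and combine Lemmas~\ref{lemma:many-lines-one-point} and~\ref{lemma:super-independence} with a short combinatorial sum. For smooth $C\in\mathcal{F}_{\leq r}$ and an $\bF_q$-point $Q$, at most one $\bF_q$-line through $Q$ can be tangent to $C$ at $Q$, since two such tangencies would force $C$ to be singular at $Q$ (Remark~\ref{rmk:independence-may-fail}). I therefore attach to $C$ an assignment $\sigma_C\colon\bP^2(\bF_q)\to\{0\}\cup\{\bF_q\text{-lines}\}$, with $\sigma_C(Q)=L$ if $L\ni Q$ is tangent to $C$ at $Q$ and $\sigma_C(Q)=0$ otherwise; this gives the partition $\mathcal{F}_{\leq r}=\bigsqcup_\sigma\mathcal{F}_\sigma$. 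Writing $n_L(\sigma)=\#\{Q:\sigma(Q)=L\}$, each $C\in\mathcal{F}_\sigma$ satisfies $C\in\mathcal{A}_{\sigma(Q)}(Q)$ at every $\bF_q$-point $Q$, and for every $\bF_q$-line $L$ with $n_L=0$ must be tangent to $L$ at some non-$\bF_q$-point of degree in $[2,r]$.

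For fixed $\sigma$ I union-bound over the non-$\bF_q$ tangency points $P_L\in L$ for the lines with $n_L=0$. Distinct $\bF_q$-lines meet only at $\bF_q$-points, so the collection of all points involved (the $\bF_q$-points $Q$ together with the non-$\bF_q$ points $P_L$) is pairwise distinct, and Lemma~\ref{lemma:super-independence} yields the exact factorisation
\[
\mu_d\Bigl(\bigcap_Q\mathcal{A}_{\sigma(Q)}(Q)\cap\bigcap_{L:\,n_L=0}\mathcal{T}_{L,P_L}\Bigr)=\prod_Q\mu_d(\mathcal{A}_{\sigma(Q)}(Q))\prod_{L:\,n_L=0}q^{-2\deg P_L}.
\]
Summing over $(P_L)$ replaces the second product by $S^{\#\{L\,:\,n_L=0\}}$, where $S\coloneqq\sum_{P\in L,\,2\leq\deg P\leq r}q^{-2\deg P}$ does not depend on $L$ by symmetry. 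Applying Lemma~\ref{lemma:many-lines-one-point} uniformly gives $\mu_d(\mathcal{A}_{\sigma(Q)}(Q))\leq(1-q^{-2})^{-1}\prod_{L\ni Q}\alpha_L^Q$, with $\alpha_L^Q=q^{-2}$ when $\sigma(Q)=L$ and $\alpha_L^Q=1-q^{-2}$ otherwise; re-indexing the double product over point--line incidences collapses $\prod_Q\prod_{L\ni Q}\alpha_L^Q=\prod_L q^{-2n_L}(1-q^{-2})^{q+1-n_L}$.

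Finally I sum over $\sigma$. Writing $|W|=\sum_L n_L=\#\{Q:\sigma(Q)\neq 0\}$, one has $\#\{L:n_L=0\}\geq(q^2+q+1)-|W|$, so (using $S<1$) $S^{\#\{L:n_L=0\}}\leq S^{(q^2+q+1)-|W|}$; the number of $\sigma$'s with a given $W$ is at most $(q+1)^{|W|}$, and the resulting geometric sum over $W\subset\bP^2(\bF_q)$ collapses to
\[
\overline{\mu}(\mathcal{F}_{\leq r})\;\leq\;\bigl[(1-q^{-2})^{q}\bigl(S+\tfrac{1}{q-1}\bigr)\bigr]^{q^2+q+1}.
\]
The theorem then reduces to the one-variable inequality $(1-q^{-2})^{q+1}\bigl(S+\tfrac{1}{q-1}\bigr)\leq q^{-1}+q^{-2}-q^{-3}$, which I expect to be the main technical obstacle: both sides agree to leading order in $q$, so the slack must come from $S$ being much smaller than $1/(q-1)$. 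I would finish by using $-\log\prod_{P:\,\deg P\geq 2}(1-q^{-2\deg P})\geq S$ together with the Euler product $\prod_{P\in L}(1-q^{-2\deg P})=(1-q^{-2})(1-q^{-1})$ from the zeta function of $\bP^1/\bF_q$ to obtain $S\leq\log\bigl((1-q^{-2})^q/(1-q^{-1})\bigr)$, and verifying the resulting elementary inequality for all $q\geq 2$.
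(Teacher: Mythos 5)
Your route is genuinely different from the paper's, and the algebra is correct up through the display $\overline{\mu}(\mathcal{F}_{\leq r})\leq\bigl[(1-q^{-2})^{q}\bigl(S+\tfrac{1}{q-1}\bigr)\bigr]^{q^2+q+1}$: the union bound over the non-$\bF_q$ tangency points $P_L$, the factorization via Lemma~\ref{lemma:super-independence} (valid since distinct $\bF_q$-lines meet only at $\bF_q$-points, so all the points involved are distinct), the uniform use of Lemma~\ref{lemma:many-lines-one-point}, the re-indexing over point--line incidences, and the geometric sum over $W\subseteq\bP^2(\bF_q)$ using $\tfrac{(q+1)q^{-2}}{1-q^{-2}}=\tfrac{1}{q-1}$ all check out. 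The paper instead decomposes $\mathcal{F}_{\leq r}$ by the \emph{full} tuple $(E_1,\dots,E_{q^2+q+1})$ of tangency sets $E_i\subseteq L_i^r$, so that after Lemmas~\ref{lemma:many-lines-one-point} and~\ref{lemma:super-independence} the sum on each line collapses \emph{exactly} to $1-\prod_{P\in L^r}(1-q^{-2\deg P})\leq q^{-1}+q^{-2}-q^{-3}$ by Lemma~\ref{lemma:tangent-to-line}, with no further estimate needed. Your coarser decomposition (only the $\bF_q$-tangency pattern $\sigma$) trades this for slack in three places --- a single $P_L$ per empty line, the bound on $\#\{L:n_L=0\}$, and the count $(q+1)^{|W|}$ --- and that slack must then be recovered.

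The genuine gap is that you never recover it: you \emph{reduce} the theorem to the scalar inequality $(1-q^{-2})^{q+1}\bigl(S+\tfrac{1}{q-1}\bigr)\leq q^{-1}+q^{-2}-q^{-3}$, sketch a roadmap (bound $S$ by $\log\bigl((1-q^{-2})^{q}/(1-q^{-1})\bigr)$ via the $\bP^1$ Euler product), and stop short of the verification. That verification is not a formality. The crude bound $S<1$ fails badly; you genuinely need the $O(q^{-2})$ estimate on $S$, and even then you are left with a transcendental inequality whose two sides differ only at order $q^{-2}$ (LHS $=q^{-1}+\tfrac12 q^{-2}+O(q^{-3})$ versus RHS $=q^{-1}+q^{-2}-q^{-3}$). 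It does appear to hold for all $q\geq 2$ --- numerics at $q=2,3,4,5$ and the asymptotics bear this out --- and is at roughly the level of difficulty of the paper's Lemma~\ref{lemma:technical-inequality}, but as written the proof is incomplete and this step needs to be carried out in full.
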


\subsection{The key decomposition.}  As a preparation for the proof of Theorem~\ref{thm:precise-upper-bound-degree-r}, we explain how to include $\mathcal{F}_{\leq r}$ into a finite union of sets indexed by the location of tangency points. The advantage of doing so is that the density of each piece will be easier to compute. A finer version of this decomposition will be used again in Section~\ref{sect:mu-exists}.

\begin{notation}
Given an $\bF_q$-line $L$, let $L^r$ denote the set of points on $L$ of degree $\leq r$.
\end{notation}

Let $L_1, L_2, \ldots, L_{q^2+q+1}$ be all of the $\bF_q$-lines in $\bP^2$. Given a line $L_i$ and a set $E_i\subseteq L_i^r$, consider the collection $\mathcal{A}_{E_i}$ of curves tangent to $L_i$ at all the points of $E_i$ and at no other points of $L_i^r$. In symbols,
\begin{align*}
\mathcal{A}_{E_i} \colonequals \left(\bigcap_{P\in E_i}\mathcal{T}_{L_i,P}\right)\backslash\left(\bigcup_{P\in L_i^r\backslash E_i}\mathcal{T}_{L_i,P}\right) = \bigcap_{P\in L_i^r}\begin{cases}\mathcal{T}_{L_i,P} & P\in E_i\\R_{\homog}\backslash \mathcal{T}_{L_i,P} & P\not\in E_i\end{cases}.
\end{align*}

Observe that if $f\in \mathcal{F}_{\leq r}$ then for each $\bF_q$-line $L_i$ there is a non-empty set $E_i\subseteq L_i^r$ of points where the curve $C=\{f=0\}$ is tangent to $L_i$. Moreover, by smoothness assumption on $C$, no point can appear in more than one $E_i$. Thus, we may write
\begin{align}\label{eq:decomposition-of-Fr}
\mathcal{F}_{\leq r} \subseteq \bigcup_{\substack{\emptyset\neq E_i\subseteq L_i^r, \ 1\leq i\leq q^2+q+1\\ E_i\cap E_j=\emptyset, \ i\neq j}}\left(\bigcap_{i=1}^{q^2+q+1} \mathcal{A}_{E_i}\right).
\end{align}

\subsection{Proof of Theorem~\ref{thm:precise-upper-bound-degree-r}} Using the decomposition given in \eqref{eq:decomposition-of-Fr}, we are now ready to prove Theorem~\ref{thm:precise-upper-bound-degree-r}. First, given an $\bF_q$-point $Q$, let 
\begin{align*}
    \mathcal{A}(Q) \colonequals \bigcap_{Q\in L_i}\begin{cases}\mathcal{T}_{L_i,Q} & Q\in E_i\\R_{\homog}\backslash \mathcal{T}_{L_i,Q} & Q\not\in E_i\end{cases}
\end{align*}
This is consistent with the notation used in Lemma~\ref{lemma:many-lines-one-point}. Indeed, $\mathcal{A}(Q)$ is either $\mathcal{A}_{0}(Q)$ or $\mathcal{A}_{L}(Q)$ for a unique $\bF_q$-line $L$ containing $Q$.  Then
\begin{eqnarray*}
\bigcap_{i=1}^{q^2+q+1} \mathcal{A}_{E_i} & = & \bigcap_{i=1}^{q^2+q+1} \bigcap_{P\in L_i^r}\begin{cases} \mathcal{T}_{L_i,P} & P\in E_i\\R_{\homog}\backslash \mathcal{T}_{L_i,P} & P\not\in E_i\end{cases}\\
 & = & \left(\bigcap_{Q\in \bP^2(\bF_q)} \mathcal{A}(Q)\right)\cap\left(\bigcap_{i=1}^{q^2+q+1} \ \bigcap_{P\in L_i^r\backslash \bP^2(\bF_q)}\begin{cases}\mathcal{T}_{L_i,P} & P\in E_i\\R_{\homog}\backslash \mathcal{T}_{L_i,P} & P\not\in E_i\end{cases}\right).
\end{eqnarray*}
By Lemma~\ref{lemma:super-independence}, for $d$ sufficiently large we may write
\begin{align}\label{eq:density-of-intersection-of-all-Ei}
\mu_d\left(\bigcap_{i=1}^{q^2+q+1} \mathcal{A}_{E_i}\right) = \left(\prod_{Q\in\bP^2(\bF_q)}\mu_{d}(\mathcal{A}(Q))\right)\prod_{i=1}^{q^2+q+1}\prod_{P\in L_i^r\backslash\bP^2(\bF_q)}\begin{cases}\mu_d(\mathcal{T}_{L_i,P}) & P\in E_i\\\mu_d(R_{\homog}\backslash \mathcal{T}_{L_i,P}) & P\not\in E_i\end{cases}.
\end{align}
We will now use Lemma~\ref{lemma:many-lines-one-point} to give an upper bound for each $\mu_{d}(\mathcal{A}(Q))$, which will provide an upper bound for \eqref{eq:density-of-intersection-of-all-Ei}. If $Q\in E_k$ for some $k$, then $Q\not\in E_j$ for all $j\neq k$ by the hypothesis. In this case, we have $\mathcal{A}(Q)=\mathcal{A}_{L_k}(Q)$ and Lemma~\ref{lemma:many-lines-one-point} yields
\begin{align}\label{eq:density-of-A_L(Q)}
   \mu_d(\mathcal{A}(Q))=\mu_d(\mathcal{A}_{L_k}(Q)) \leq \prod_{i=1}^{q^2+q+1} \begin{cases}\mu_d(\mathcal{T}_{L_i,Q}) & Q\in E_i\\\mu_d(R_{\homog}\backslash \mathcal{T}_{L_i,Q}) & Q\not\in E_i\end{cases}.
\end{align}
If $Q\notin E_i$ for all $1\leq i\leq q^2+q+1$, then $\mathcal{A}(Q)=\mathcal{A}_{0}(Q)$ and Lemma~\ref{lemma:many-lines-one-point} yields,
\begin{align}\label{eq:density-of-A_0(Q)}
   \mu_d(\mathcal{A}(Q))=\mu_d(\mathcal{A}_{0}(Q)) \leq \frac{1}{1-q^{-2}} \prod_{i=1}^{q^2+q+1} \begin{cases}\mu_d(\mathcal{T}_{L_i,Q}) & Q\in E_i\\\mu_d(R_{\homog}\backslash \mathcal{T}_{L_i,Q}) & Q\not\in E_i\end{cases}.
\end{align}
Substituting \eqref{eq:density-of-A_L(Q)} and \eqref{eq:density-of-A_0(Q)} into  \eqref{eq:density-of-intersection-of-all-Ei}, we obtain the following upper bound:
\begin{align*}
    \mu_d\left(\bigcap_{i=1}^{q^2+q+1} \mathcal{A}_{E_i}\right) \leq  \left(\frac{1}{1-q^{-2}}\right)^{\#\left(\bP^2(\bF_q)\backslash\bigcup_{i=1}^{q^2+q+1}E_i\right)}\prod_{i=1}^{q^2+q+1}\prod_{P\in L_i^r}\begin{cases}\mu_d(\mathcal{T}_{L_i,P}) & P\in E_i\\\mu_d(R_{\homog}\backslash \mathcal{T}_{L_i,P}) & P\not\in E_i\end{cases}.
\end{align*}
After bounding the exponent of $(1-q^{-2})^{-1}$ by $\#\bP^2(\bF_q) = q^2+q+1$ and using Lemma~\ref{lemma:multi-line-independence}, we deduce that
\begin{align*}
    \mu_d\left(\bigcap_{i=1}^{q^2+q+1} \mathcal{A}_{E_i}\right) \leq \left(\frac{1}{1-q^{-2}}\right)^{q^2+q+1}\left(\prod_{i=1}^{q^2+q+1}\left(\prod_{P\in E_i}q^{-2\deg(P)}\prod_{P\in L_i^r\backslash E_i}(1-q^{-2\deg(P)})\right)\right)
\end{align*}
holds for $d$ sufficiently large. Now, we use the previous inequality to bound the union in \eqref{eq:decomposition-of-Fr}.
\begin{align*}
\mu_d(\mathcal{F}_{\leq r}) &\leq \left(\frac{1}{1-q^{-2}}\right)^{q^2+q+1}\cdot \sum_{\substack{\emptyset\neq E_i\subseteq L_i^r \\ E_i\cap E_j=\emptyset,\ i\neq j}}\left(\prod_{i=1}^{q^2+q+1}\left(\prod_{P\in E_i}q^{-2\deg(P)}\prod_{P\in L_i^r\backslash E_i}(1-q^{-2\deg(P)})\right)\right)\\
 &\leq \left(\frac{1}{1-q^{-2}}\right)^{q^2+q+1} \ \cdot \ \prod_{i=1}^{q^2+q+1}\left(\sum_{\emptyset\neq E_i\subseteq L_i^r}\left(\prod_{P\in E_i}q^{-2\deg(P)}\prod_{P\in L_i^r\backslash E_{i}}(1-q^{-2\deg(P)})\right)\right)\\
 &=  \Big(1-q^{-2}\Big)^{-(q^2+q+1)}\cdot \left(\sum_{\emptyset\neq E_1\subseteq L_1^r}\left(\prod_{P\in E_1}q^{-2\deg(P)}\prod_{P\in L_1^r\backslash E_1}(1-q^{-2\deg(P)})\right)\right)^{q^2+q+1}.
\end{align*}
Going from the first to the second line, we added terms where some points appear in more than one $E_i$, which makes the sum bigger. Going from the second to the third line, we used the symmetry among the $\bF_q$-lines $L_1, \ldots, L_{q^2+q+1}$.

Finally, since the sum on the last line represents all the ways a curve can be tangent to the line $L_1$ at a point of degree at most $r$, we apply Lemma~\ref{lemma:tangent-to-line} to bound this sum by $q^{-1}+q^{-2}-q^{-3}$. Indeed, as $r\to\infty$, the sum approaches $q^{-1}+q^{-2}-q^{-3}$ from below. We deduce that
\begin{align*}
\mu_d(\mathcal{F}_{\leq r}) \leq (1-q^{-2})^{-(q^2+q+1)}\cdot (q^{-1}+q^{-2}-q^{-3})^{q^2+q+1}
\end{align*}
for all sufficiently large $d$. Taking the limit supremum as $d\to\infty$, we obtain 
\begin{align*}
\overline{\mu}(\mathcal{F}_{\leq r}) \leq (1-q^{-2})^{-(q^2+q+1)}\cdot (q^{-1}+q^{-2}-q^{-3})^{q^2+q+1}
\end{align*} 
as desired. 

This completes the proof of Theorem~\ref{thm:precise-upper-bound-degree-r}. As explained in Section~\ref{subsect:reduction}, this implies Theorem~\ref{thm:precise-upper-bound}.

\section{Existence of the natural density} \label{sect:mu-exists}

In this section, we prove that $\mu(\mathcal{F})$ exists as a limit. As a consequence,  $\underline{\mu}(\mathcal{F})=\overline{\mu}(\mathcal{F})=\mu(\mathcal{F})$. 

\begin{proposition}\label{prop:mu-exists} The subset
\begin{align*}
\mathcal{F}=\{f\in R_{\homog} \ | \ & C=\{f=0\} \text{ is a smooth plane curve} \\ 
&\text{such that all $\bF_q$-lines are tangent to $C$}\}
\end{align*}
has a well-defined natural density $\mu(\mathcal{F})$.
\end{proposition}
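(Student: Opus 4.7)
The plan is to show $\underline{\mu}(\mathcal{F})=\overline{\mu}(\mathcal{F})$ via a two-step reduction: first truncate to tangency points of bounded degree using the medium-degree estimate recalled in Section~\ref{subsect:reduction}, and then compute the density of the truncated set \emph{exactly} via a finite decomposition together with Poonen's Bertini theorem with Taylor conditions \cite{Poo04}*{Theorem 1.2}.

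First, given $\varepsilon>0$, I would invoke \eqref{eq:medium-degree-epsilon} to choose $r$ so large that the upper density of curves having any tangency point of degree $>r$ along any $\bF_q$-line is less than $\varepsilon$. Then I would define
\begin{align*}
\mathcal{F}_r'\colonequals \{f\in\mathcal{F}\ |\ C=\{f=0\}\text{ is tangent to each }\bF_q\text{-line }L_i\text{ at some point of degree }\leq r\}.
\end{align*}
Unlike $\mathcal{F}_{\leq r}$, this definition still permits additional tangencies at points of degree $>r$. Since $\mathcal{F}\setminus\mathcal{F}_r'\subseteq\bigcup_{i=1}^{q^2+q+1}\mathcal{M}^i_{r+1}$, one has $\overline{\mu}(\mathcal{F}\setminus\mathcal{F}_r')<\varepsilon$, and the sandwich
\begin{align*}
\underline{\mu}(\mathcal{F}_r')\leq\underline{\mu}(\mathcal{F})\leq\overline{\mu}(\mathcal{F})\leq\overline{\mu}(\mathcal{F}_r')+\varepsilon
\end{align*}
reduces the problem to showing that $\mu(\mathcal{F}_r')$ exists for each fixed $r$.

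For fixed $r$, the key observation is that, unlike the upper-bound inclusion~\eqref{eq:decomposition-of-Fr}, one has the exact finite disjoint equality
\begin{align*}
\mathcal{F}_r' \;=\; \bigsqcup_{\substack{\emptyset\neq E_i\subseteq L_i^r\\ E_i\cap E_j=\emptyset,\ i\neq j}} \left(R_{\text{smooth}}\cap\bigcap_{i=1}^{q^2+q+1}\mathcal{A}_{E_i}\right),
\end{align*}
since each $f\in\mathcal{F}_r'$ uniquely determines its tangency pattern at points of degree $\leq r$ on each line, and smoothness forces the $E_i$ to be pairwise disjoint. Each piece is a set of smooth plane curves satisfying finitely many first-order Taylor conditions at points of $\bigcup_i L_i^r$. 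I would then take $Z\subset\bP^2$ to be the disjoint union of the first-order infinitesimal neighborhoods of these points, and $T\subset H^0(Z,\mathcal{O}_Z)$ the subset encoding the positive (tangent at $P\in E_i$) and negative (not tangent at $P\in L_i^r\setminus E_i$) conditions. Poonen's Theorem~1.2 then yields the exact density $(\#T/\#H^0(Z,\mathcal{O}_Z))\cdot\zeta_{\bP^2}(3)^{-1}$ for each piece; summing over the finitely many admissible tuples gives an exact value for $\mu(\mathcal{F}_r')$, and letting $\varepsilon\to 0$ concludes the argument.

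The main obstacle is the bookkeeping at each point $P\in\bigcup_i L_i^r$: when $P\in\bP^2(\bF_q)$ lies on $q+1$ distinct $\bF_q$-lines, the contributions from the several $\mathcal{A}_{E_i}$ sharing $P$ must be jointly encoded at the single first-order scheme supported at $P$, so $T$ becomes an intersection of linear subspaces (for positive conditions) and their complements (for negative conditions) inside $H^0(Z,\mathcal{O}_Z)$. The combinatorics here mirrors the analysis underlying Lemmas~\ref{lemma:multi-line-independence},~\ref{lemma:many-lines-one-point},~and~\ref{lemma:super-independence}; the crucial point is that Poonen's theorem applies to \emph{any} subset $T$, so the positive and negative tangency conditions are handled on the same footing, and computing $\#T$ reduces to a bounded, explicit combinatorial task depending only on $q$ and $r$.
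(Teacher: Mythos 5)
Your proof is correct, and it actually follows a slightly different --- and more careful --- decomposition than the paper's. The overall two-step strategy (truncate to tangency points of bounded degree via the medium-degree estimate~\eqref{eq:medium-degree-epsilon}, then decompose the truncated set into finitely many pieces and apply Poonen's Theorem~1.2 to each) is the same, but the choice of truncated set differs, and this matters. The paper works with $\mathcal{F}_{\leq r}$ (\emph{all} tangency points of degree $\leq r$) and partitions it via the sets $\mathcal{A}^{\operatorname{sm}}_{E_i}$, which require $C$ to be tangent to $L_i$ at exactly $E_i$ and at \emph{no other} point of $L_i$, including points of degree $>r$. That ``no tangency at $L_i\setminus E_i$'' clause involves infinitely many closed points of $L_i$, so it is not a local condition on a finite subscheme; the paper's assertion that membership in $\bigcap_i\mathcal{A}^{\operatorname{sm}}_{E_i}$ is equivalent to $\phi_d(f)\in T$ for a finite $Z\subset\bP^2$ and $T\subset H^0(Z,\mathcal{O}_Z)$ is therefore not literally correct, and \cite{Poo04}*{Theorem 1.2} does not apply to these pieces without an additional limiting argument (for example, a nested sandwich in $r'>r$ using \eqref{eq:medium-degree-epsilon} again).

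Your choice of $\mathcal{F}_r'$ --- tangent to each $\bF_q$-line at \emph{some} degree-$\leq r$ point, with no constraint at higher-degree points --- sidesteps this cleanly. The disjoint decomposition of $\mathcal{F}_r'$ into the pieces $R_{\text{smooth}}\cap\bigcap_i\mathcal{A}_{E_i}$ is exact (uniqueness of the degree-$\leq r$ tangency pattern plus disjointness of the $E_i$ by smoothness), and because the $\mathcal{A}_{E_i}$ constrain only points of the finite set $L_i^r$, each piece genuinely is of the form $\{f: \phi_d(f)\in T,\ H_f\text{ smooth}\}$ for a finite subscheme $Z$ supported on $\bigcup_i L_i^r$; Poonen's Theorem~1.2 then applies directly. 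The sandwich $\underline{\mu}(\mathcal{F}_r')\leq\underline{\mu}(\mathcal{F})\leq\overline{\mu}(\mathcal{F})\leq\overline{\mu}(\mathcal{F}_r')+\varepsilon$ is correct since $\mathcal{F}_r'\subseteq\mathcal{F}$ and $\mathcal{F}\setminus\mathcal{F}_r'\subseteq\bigcup_i\mathcal{M}^i_{r+1}$, and letting $\varepsilon\to 0$ finishes the argument. One minor caveat you inherit from the paper: Theorem~1.2 gives the density as $\frac{\#T}{\#H^0(Z,\mathcal{O}_Z)}\zeta_U(3)^{-1}$ with $U=\bP^2\setminus Z$, and $T$ must exclude the singular jets at each point of $Z$ to enforce smoothness there; writing $\zeta_{\bP^2}(3)^{-1}$ is an imprecision, but since it affects only the value of the density and not its existence, it does not undermine Proposition~\ref{prop:mu-exists}.
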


\begin{proof} As explained in Section~\ref{subsect:reduction}, we can write
\begin{align*}
    \mathcal{F} = \bigcup_{r=1}^{\infty} \mathcal{F}_{\leq r}.
\end{align*}
We will show that $\mu(\mathcal{F}_{\leq r})$ exists for each $r\geq 1$. Let us first explain how this will prove the proposition. The sequence $(\mu(\mathcal{F}_{\leq r}))_{r=1}^{\infty}$ is clearly increasing and also bounded by Theorem~\ref{thm:precise-upper-bound-degree-r}, and hence converges. Using inequality~\eqref{eq:density-of-F-and-Fr}, for each $\varepsilon>0$ there exists a positive integer $r(\varepsilon)$ such that for all $r>r(\epsilon)$
\begin{align*}
    \mu(\mathcal{F}_{\leq r}) \leq \underline{\mu}(\mathcal{F}) \leq \overline{\mu}(\mathcal{F}) \leq \mu(\mathcal{F}_{\leq r})+\varepsilon. \end{align*}
Letting $r\to\infty$,
\begin{align*}
    \lim_{r\to\infty}\mu(\mathcal{F}_{\leq r}) \leq \underline{\mu}(\mathcal{F}) \leq \overline{\mu}(\mathcal{F}) \leq \lim_{r\to\infty}\mu(\mathcal{F}_{\leq r})+\varepsilon. \end{align*}
As observed above, the limit $\displaystyle \lim_{r\to\infty}\mu(\mathcal{F}_{\leq r})$ exists. Finally, letting $\varepsilon\to 0$ shows that $\mu(\mathcal{F})$ exists and equals $\displaystyle\lim_{r\to\infty}\mu(\mathcal{F}_{\leq r})$.
    
In order to show that $\mu(\mathcal{F}_{\leq r})$ exists, we will decompose $\mathcal{F}_{\leq r}$ as a finite union similar to the inclusion in \eqref{eq:decomposition-of-Fr} but with an equality. As before, let $L_1, \ldots, L_{q^2+q+1}$ be the $\bF_q$-lines in $\bP^2$. For each subset $E_i\subset L_i$, let
\begin{align*}
    \mathcal{A}^{\operatorname{sm}}_{E_i} =\{f\in R_{\homog} \ | \ & C=\{f=0\} \text{ is a smooth plane curve such that } C \text{ is tangent to } \\ 
& L_i \text{ at each point of } E_i \text{ and at no point of } L_i\setminus E_i \}.
\end{align*} 
Then we have the following decomposition
\begin{align}\label{eq:exact-decomposition-of-Fr}
\mathcal{F}_{\leq r} = \bigcup_{\substack{\emptyset\neq E_i\subseteq L_i^r, \ 1\leq i\leq q^2+q+1\\ E_i\cap E_j=\emptyset, \ i\neq j}}\left(\bigcap_{i=1}^{q^2+q+1} \mathcal{A}^{\operatorname{sm}}_{E_i}\right).
\end{align}
Since the union in~\eqref{eq:exact-decomposition-of-Fr} is a disjoint union,
\begin{align}\label{eq:exact-decomposition-of-Fr-mu-d}
\mu_d\left(\mathcal{F}_{\leq r}\right) = \sum_{\substack{\emptyset\neq E_i\subseteq L_i^r, \ 1\leq i\leq q^2+q+1\\ E_i\cap E_j=\emptyset, \ i\neq j}} \mu_d\left(\bigcap_{i=1}^{q^2+q+1} \mathcal{A}^{\operatorname{sm}}_{E_i}\right).
\end{align}
If we can show that the limit
\begin{align}\label{eq:natural-density-of-AEis}
    \mu\left(\bigcap_{i=1}^{q^2+q+1} \mathcal{A}^{\operatorname{sm}}_{E_i}\right) = \lim_{d\to\infty} \mu_d\left(\bigcap_{i=1}^{q^2+q+1} \mathcal{A}^{\operatorname{sm}}_{E_i}\right)
\end{align}
exists, then we can take the limit as $d\to\infty$ in \eqref{eq:exact-decomposition-of-Fr-mu-d} which will show that $\mu(\mathcal{F}_{\leq r}) = \displaystyle \lim_{d\to\infty} \mu_d(\mathcal{F}_{\leq r})$ also exists. 

In order to prove the limit in \eqref{eq:natural-density-of-AEis} exists, we appeal to Poonen's Bertini theorem with Taylor conditions \cite{Poo04}*{Theorem 1.2}. The proof here is similar to those of Lemma~\ref{lemma:lower-bound-for-single-point-line} and Lemma~\ref{lemma:multi-line-independence}. Since $r$ is fixed and each $E_i$ is finite, membership in the intersection of $\mathcal{A}_{E_i}^{\operatorname{sm}}$ is described by finitely many local conditions. In other words, there is a finite subscheme $Z\subset\bP^2$ and a finite subset $T\subset H^{0}(Z, \mathcal{O}_{Z})$ such that a given polynomial $f\in R_d$ satisfies 
\begin{align*}
    f\in  \bigcap_{i=1}^{q^2+q+1} \mathcal{A}^{\operatorname{sm}}_{E_i}
\end{align*}
if and only if $\phi_d(f)\in T$, where
\begin{align*}
\phi_d: \underbrace{H^{0}(\mathbb{P}^2, \mathcal{O}_{\mathbb{P}^2}(d))}_{=R_{d}} \to H^{0}(Z, \mathcal{O}_{Z}(d))
\end{align*}
is the natural restriction map. By \cite{Poo04}*{Theorem 1.2}, it follows that
\begin{align*}
    \mu\left(\bigcap_{i=1}^{q^2+q+1} \mathcal{A}^{\operatorname{sm}}_{E_i}\right) = \frac{\# T}{\# H^{0}(Z, \mathcal{O}_Z)} \zeta_{\bP^2}(3)^{-1}, 
    \end{align*}
and in particular the natural density in \eqref{eq:natural-density-of-AEis} exists. This completes the proof of the proposition.
\end{proof}

Finally, we prove our main result. 

\begin{proof}[Proof of Theorem~\ref{thm:smooth-proportion-transverse-free}] Combine Theorem~\ref{thm:lower-bound}, Theorem~\ref{thm:precise-upper-bound} and Proposition~\ref{prop:mu-exists}.
\end{proof}

\bibliographystyle{alpha}
\bibliography{proportion-bibliography.bib}

\end{document}